\DeclareMathOperator{\Sym}{Sym}
\DeclareMathOperator{\vertt}{vert}
\DeclareMathOperator{\CUT}{CUT}
\newcommand{\lin}{\operatorname{lin}}
\DeclareMathOperator{\vol}{vol}
\DeclareMathOperator{\Lift}{Lift}
\DeclareMathOperator{\DV}{V}
\DeclareMathOperator{\Orth}{O}
\DeclareMathOperator{\conv}{conv}
\DeclareMathOperator{\Iso}{Iso}
\DeclareMathOperator{\treg}{Reg}
\DeclareMathOperator{\tface}{Face}
\subjclass{03D15, 11H56, 11H06, 11B1, 52B55, 52B12}
\newcommand{\Z}{\mathbb Z}
\newcommand{\Q}{\mathbb Q}
\newcommand{\R}{\mathbb R}
\newcommand{\NP}{\mathrm{NP}}
\newcommand{\RR}{\ensuremath{\mathbb{R}}}
\newcommand{\ZZ}{\ensuremath{\mathbb{Z}}}
\newcommand{\sharpp}{$\#\operatorname{P}$}
\def\QuotS#1#2{\leavevmode\kern-.0em\raise.2ex\hbox{$#1$}\kern-.1em/\kern-.1em\lower.25ex\hbox{$#2$}}
\newtheorem{theorem}{Theorem}
\newtheorem{problem}{Problem}
\newtheorem{proposition}{Proposition}
\newtheorem{lemma}{Lemma}
\begin{document}

\title{Complexity and algorithms for computing Voronoi cells of lattices}

\date{\today}

\author{Mathieu Dutour Sikiri\'c}
\address{M. Dutour Sikiri\'c, Rudjer Boskovi\'c Institute, Bijenicka 54, 10000 Zagreb, Croatia}
\email{mdsikir@irb.hr}

\author{Achill Sch\"urmann}
\address{A. Sch\"urmann, Mathematics Department, University of Magdeburg, 
         39106 Magdeburg, Germany}
\email{achill@math.uni-magdeburg.de}

\author{Frank Vallentin}
\address{F. Vallentin, Centrum voor Wiskunde en Informatica (CWI), Kruislaan 413, 1098 SJ Amsterdam, The Netherlands}
\email{f.vallentin@cwi.nl} 

\thanks{The work of the first author has been supported by the
Croatian Ministry of Science, Education and Sport under contract
098-0982705-2707.
The second and the third author were supported by the Deutsche
Forschungsgemeinschaft (DFG) under grant SCHU 1503/4-2.  The third
author was also supported by the Netherlands Organization for
Scientific Research under grant NWO 639.032.203.
All three authors thank the Hausdorff Research Institute for Mathematics
for its hospitality and support.}

\begin{abstract}
  In this paper we are concerned with finding the vertices of the Voronoi cell
  of a Euclidean lattice.
  Given a basis of a lattice, we prove that computing the number of vertices
  is a \sharpp-hard problem.
  On the other hand we describe an algorithm for this
  problem which is especially suited for low dimensional (say
  dimensions at most $12$) and for highly-symmetric lattices.  We use our
  implementation, which drastically outperforms those of current computer
  algebra systems, to find the
  vertices of Voronoi cells and quantizer constants of some prominent
  lattices.
\end{abstract}

\keywords{lattice, Voronoi cell, Delone cell, covering radius, quantizer constant, lattice isomorphism problem, zonotope}

\maketitle

\section{Introduction}
\label{sec:introduction}

Let $L = B\Z^m \subseteq \R^n$ be a {\em lattice} of rank $m$ in Euclidean
space given by a matrix $B \in \R^{n \times m}$ of rank $m$.
By $\lin L$ we denote the linear subspace spanned by the elements of $L$.
The {\em Voronoi cell} of $L$ is
\begin{equation*}
\DV(L) = \{x \in \lin L : \text{$\|x\| \leq \|x - v\|$ for all $v \in
L$}\}.
\end{equation*}
The Voronoi cell of a lattice is a centrally symmetric, convex polytope.
The polytopes $V(L)+v$ for $v\in L$ tile $\lin L$.
The study of Voronoi cells is motivated by the fact
that most important geometric lattice parameters have a direct 
interpretation in terms of the Voronoi cell: The {\em determinant} $\det L$
equals the volume of $\DV(L)$, the {\em packing radius} $\lambda(L)$ equals
the inradius of $\DV(L)$, the {\em covering radius} $\mu(L)$ equals the
circumradius of $\DV(L)$, and the {\em quantizer constant} $G(L)$ is
\begin{equation*}
G(L) = (\det L)^{-(1+2/n)} \int_{\DV(L)} \|x\|^2 dx.
\end{equation*}

In this paper we consider theoretical and practical aspects of the
computation of the covering radius as well as
the quantizer constant of a lattice.
These two parameters have many applications, we just name a few:
By computing the covering radius, we get an upper bound for the lattice
sphere covering problem, which is the problem of minimizing the covering
radius among the
lattices of fixed determinant (see \cite[Chapter 2]{conwaysloane}
and \cite{sv-2006}).
The computation of the covering radius of the Leech lattice in
\cite[Chapter 23]{conwaysloane} had a major impact on the study of
hyperbolic reflection groups (see \cite[Chapter 27]{conwaysloane}).
An upper bound for the Frobenius number of a set of integers 
can be obtained from the covering radius of a suitable lattice
(see \cite{frobenius}).
A recent application comes from public key cryptography; Micciancio
\cite{micciancio-2004} found a new connection between the
average-case complexity of finding the packing radius and the
worst-case complexity of determining the covering radius.
In information theory, the quality of a lattice as a vector
quantizer is determined by its quantizer constant
(see \cite{gg-1992,almost,sloanequantiz} and
\cite[Chapter 2.3, Chapter 21]{conwaysloane}).

The structure of this paper is as follows.
In Section~\ref{sec:complexity} we discuss the computational complexity
of the covering radius problem. We prove that the related problem of
counting vertices of the Voronoi cell is \sharpp-hard.  As a byproduct
of our construction, we show that the lattice isomorphism problem
is at least as difficult as the graph isomorphism problem.
We turn to practical algorithms for the covering radius problem in
Section~\ref{sec:algorithm}. There we describe an algorithm which
computes the vertices of the Voronoi cell of a lattice.
Based on this algorithm we give an algorithm for computing the
quantizer constant in Section~\ref{sec:quantizer}.
In Section~\ref{sec:results} we report on computations with our
implementation.
We determine the exact covering radius and
quantizer constants of many prominent lattices which were not
known before.

\section{Computational complexity}\label{sec:complexity}

We formulate the covering radius problem as a decision problem.

\begin{problem} Covering radius problem
\begin{description}
\item[Input] $m$, $n$, $B \in \Q^{m \times n}$, $\mu \in \Q$.
\item[Output] \textsf{Yes}, if $\mu(B\Z^n) \leq \mu$, \textsf{No} otherwise.
\end{description}
\end{problem}

It is conjectured (see \cite[Section 1.1]{micciancio-2004}) that the
covering radius problem is $\NP$-hard. Haviv and Regev
\cite{haviv-regev-2006} showed that there is a constant $c_p$ so that
the covering radius in the $l_p$-norm is $\Pi_2$-hard to approximate
within a constant less than $c_p$ for any large enough $p$. In
\cite{gmr-2005} Guruswami, Micciancio and Regev proved that
approximating it within a factor of $O(\sqrt{m/\log m})$ for a lattice
of rank $m$ cannot be $\NP$-hard unless the polynomial hierarchy
collapses.

Currently, there is only one known general and practical method to
compute $\mu(L)$ for a lattice $L$: First one enumerates the vertices
of $\DV(L)$ and then one finds the vertex with largest norm.
The number of vertices of $\DV(L)$ can be as
large as $(m+1)!$ and furthermore, as we show in Theorem~\ref{sharpp},
even computing this number is \sharpp-hard.

\begin{problem}\label{prob:complex_vertices}
Vertices of a lattice Voronoi cell
\begin{description}
\item[Input] $m$, $n$, $B \in \Q^{m \times n}$.
\item[Output] Number of vertices of $\DV(B\Z^n)$.
\end{description}
\end{problem}

\begin{theorem}
\label{sharpp}
The problem ``Vertices of a lattice Voronoi cell'' is \sharpp-hard.
\end{theorem}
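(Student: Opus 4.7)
The plan is to reduce from the \sharpp-complete problem of counting acyclic orientations of a graph (Linial). Given a graph $G = (V, E)$, the goal is a polynomial-time construction of a lattice $L_G$ such that the number of vertices of $\DV(L_G)$ equals the number of acyclic orientations of $G$.

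The paradigmatic example is the root lattice $A_n$: its Voronoi cell is the permutahedron, which is the graphic zonotope of the complete graph $K_{n+1}$, and whose vertices are in bijection with the permutations of $n+1$ letters, i.e., with acyclic orientations of $K_{n+1}$. For a general $G$, one would like the Voronoi cell of $L_G$ to realize the graphic zonotope
\[
Z_G \;=\; \sum_{ij \in E}\bigl[-\tfrac12(e_i - e_j),\ \tfrac12(e_i - e_j)\bigr],
\]
whose vertices biject with acyclic orientations of $G$ by the classical Greene--Zaslavsky theorem; combined with Linial's result, such a construction would immediately yield the \sharpp-hardness. The hoped-for general mechanism is that, when $L_G$ is generated by the columns of a totally unimodular matrix (as the signed vertex--edge incidence matrix is), classical results on zonotopal Voronoi cells (McMullen, Erdahl--Ryshkov) force $\DV(L_G)$ to be a zonotope spanned by the Voronoi-relevant vectors.

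The main obstacle is that the naive graphic lattice generated by the edge vectors $\{e_i - e_j : ij \in E\}$ does \emph{not} generally have $Z_G$ as its Voronoi cell: its Voronoi-relevant vectors usually include more than the edges. For example, $L_{P_3}$ already coincides with the root lattice $A_2$, so its Voronoi cell is a regular hexagon with six vertices rather than the parallelogram with four vertices expected from the four acyclic orientations of $P_3$; more generally, the graphic lattice depends only on the connected components of $G$ and loses all finer combinatorial information. The core of the proof must therefore be a refined, polynomial-time construction that enlarges or perturbs the lattice (e.g.\ by appending orthogonal coordinates with carefully tuned large weights to each edge generator, or by an explicit ``gadget'' from oriented-matroid theory) so that the Voronoi-relevant vectors are in bijection with the edges of $G$, while preserving enough structure (total unimodularity or an equivalent zonotopal criterion) that the Voronoi cell is still the graphic zonotope $Z_G$. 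Once this gadget is built and verified, the Greene--Zaslavsky bijection and Linial's theorem complete the reduction and establish the claimed \sharpp-hardness; as a side benefit, the map $G \mapsto L_G$ is faithful enough to graph structure that isomorphism of graphs corresponds to isomorphism of the resulting lattices, yielding the byproduct on the lattice isomorphism problem mentioned earlier.
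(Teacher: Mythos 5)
Your high-level plan coincides with the paper's: reduce from Linial's \sharpp-complete counting of acyclic orientations, and use the Greene--Zaslavsky bijection between acyclic orientations and chambers of the graphic hyperplane arrangement (equivalently, vertices of the graphic zonotope). You also correctly identify the crucial obstruction: the ``obvious'' lattice $D_G\ZZ^E = \ZZ\text{-span}\{e_i - e_j : ij \in E\} \subset \RR^V$ depends only on the connected components of $G$, so for any connected $G$ it is just a copy of $\mathsf{A}_{|V|-1}$ with the permutohedron (not the graphic zonotope) as Voronoi cell. Up to that point you are tracking the paper closely and the counterexample you give ($P_3$ vs.\ $\mathsf{A}_2$) is apt.

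The gap is that you stop at naming the obstruction and gesture at a ``gadget'' (perturbing generators, appending weighted orthogonal coordinates) without producing one, and the fixes you sketch would not lead to the right object. The paper's actual construction works in the \emph{edge space} $\ZZ^E$ rather than in $\RR^V$: the lattice $L(G)$ is the \emph{cut lattice}, generated by the signed fundamental-cut vectors $b_{T,e}$ of a spanning tree $T$ (Section~\ref{graph lattices}). This is not a perturbation of the edge-generated lattice; it is a different lattice in a different ambient space, and it does retain the full combinatorics of $G$ (for a tree it is $\ZZ^{n-1}$ with the cube as Voronoi cell, matching the $2^{n-1}$ acyclic orientations, exactly where your $\mathsf{A}_{n-1}$ lattice would fail). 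The second missing ingredient is the structural lemma that makes this lattice work: because $L(G)$ is a \emph{regular} lattice in the sense of Tutte (equivalently, a totally unimodular / unimodularly coordinatized lattice), its Voronoi cell is the orthogonal projection of the unit cube, $\DV(L) = P([-\tfrac12,\tfrac12]^E)$, where $P$ projects onto $\lin L$ (Theorem~\ref{projection}; this is the precise form of the zonotopal-Voronoi-cell result you were hoping to invoke via McMullen/Erdahl--Ryshkov). Combined with Biggs' factorization $P = X D_G$ with $X$ a linear isomorphism onto the image of $D_G$, this shows $\DV(L(G))$ is linearly isomorphic to the graphic zonotope $Z(D_G)$, closing the chain acyclic orientations $\leftrightarrow$ chambers of $H(D_G)$ $\leftrightarrow$ vertices of $Z(D_G)$ $\leftrightarrow$ vertices of $\DV(L(G))$. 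Without the cut-lattice construction and the regular-lattice projection lemma, your argument does not yet constitute a proof.
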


It will be obvious from the proof that we could restrict the problem
to the case $m = n$.
We reduce the problem ``Acyclic orientations of a graph'',
which Linial \cite{linial-1986} showed to be \sharpp-complete,
to Problem \ref{prob:complex_vertices}.

\begin{problem}
\label{prob:acyclic}
Acyclic orientations of a graph
\begin{description}
\item[Input] A graph $G = (V, E)$.
\item[Output] The number of orientations of $G$ with no directed circuit. 
\end{description}
\end{problem}

The structure of the proof of Theorem~\ref{sharpp} is as follows: In
Section~\ref{graph lattices} we construct a matrix $B$ with columns
indexed by $E$ defining a lattice $L(G) = B\Z^E$ from $G$ in
polynomial time. Then we show that the vertices of the
Voronoi cell of $V(L(G))$ are in bijection with the acyclic
orientations of $G$. To establish this bijection we need several
intermediate steps. In Section~\ref{graphs hyperplanes} we associate
to $G$ a hyperplane arrangement $H(G)$ whose chambers
are in bijection with the acyclic orientations of $G$.
In Section~\ref{hyperplanes zonotopes} we recall that the
chambers of a hyperplane arrangement are in bijection with
the vertices of a zonotope associated to the hyperplane arrangement.
These two steps are standard and we cover them rather briefly.
In Section~\ref{lattices zonotopes} we show that the Voronoi
cell of $L(G)$ is a zonotope which, up to a linear transformation, is
the one associated to the hyperplane arrangement $H(G)$.
In Section \ref{byproduct1}, as a byproduct of this construction,
we show that the lattice isomorphism problem is at least as difficult
as the graph isomorphism problem.
Some related complexity results concerning vertex enumeration of
polyhedra given by linear inequalities are in \cite{khachian,dyer83}.

\subsection{From graphs to lattices}
\label{graph lattices}

Let $G=(V,E)$ be a connected graph with vertex set $V=\{1,\ldots,n\}$ and
edge set $E$.
We consider the following orientation of the edges of $G$:
The head of an edge $e = \{v,w\} \in E$ is $e^+ = \max\{v,w\}$
and the tail is $e^- = \min\{v,w\}$.

Let $T \subseteq E$ be the edge set of a spanning tree of $G$, and let
$e \in T$. Deleting $e$ from $T$ divides $T$ into two connected
components with vertex sets $T^+_e$ and $T^-_e$, where $e^+ \in T^+_e$
and $e^- \in T^-_e$. Define the vector $b_{T,e} \in \Z^E$ by
\begin{equation*}
b_{T,e}(f) = 
\left\{
\begin{array}{cl}
1, & \text{if $f^+ \in T^+_e$ and $f^- \in T^-_e$,}\\
-1, & \text{if $f^- \in T^+_e$ and $f^+ \in T^-_e$,}\\
0, & \text{otherwise.}
\end{array}
\right.
\end{equation*}
Then
\begin{equation*}
L(G,T) = \left\{\sum_{e \in T} \alpha_e b_{T,e} : \alpha_e \in \Z \right\} \subseteq \Z^E
\end{equation*}
is a lattice of rank $n-1$.

\begin{proposition}
Let $T$ and $T'$ be spanning trees of $G$.
Then, $L(G,T)=L(G,T')$.
\end{proposition}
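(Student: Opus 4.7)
The plan is to exhibit $L(G,T)$ as an intrinsic invariant of $G$, independent of the choice of spanning tree. Let $\delta\colon \Z^V \to \Z^E$ denote the coboundary map defined by $\delta(\phi)(f) = \phi(f^+)-\phi(f^-)$, and set $\Lambda(G) := \delta(\Z^V)$, the \emph{cocycle lattice} of $G$ inside $\Z^E$. The goal is to prove $L(G,T) = \Lambda(G)$ for every spanning tree $T$; since the right-hand side does not involve $T$, the proposition follows.

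For the inclusion $L(G,T)\subseteq \Lambda(G)$, a direct case check through the three possibilities in the definition of $b_{T,e}$ identifies $b_{T,e}$ with $\delta(\chi_{T_e^+})$, where $\chi_S\in\Z^V$ denotes the characteristic function of a subset $S\subseteq V$. Hence each generator of $L(G,T)$ is a coboundary, and the inclusion follows.

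For the reverse inclusion $\Lambda(G)\subseteq L(G,T)$, decomposing $\phi\in\Z^V$ as $\sum_v \phi(v)\chi_{\{v\}}$ shows that $\Lambda(G)$ is spanned over $\Z$ by the elementary coboundaries $\delta(\chi_{\{v\}})$, $v\in V$. I would root $T$ at an arbitrary vertex $r$; for each non-root $v$ let $e_v$ be the unique edge of $T$ joining $v$ to its parent, and let $S_v\subseteq V$ be the vertex set of the subtree hanging below $v$. By construction $S_v$ equals one of $T_{e_v}^+$, $T_{e_v}^-$, so the first step yields $\delta(\chi_{S_v}) = \pm b_{T,e_v} \in L(G,T)$ for every non-root $v$. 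The telescoping identity
\[
\chi_{\{v\}} \;=\; \chi_{S_v} - \sum_{w\text{ child of }v} \chi_{S_w}
\]
then exhibits each $\delta(\chi_{\{v\}})$, $v\neq r$, as an integer combination of the generators $b_{T,e}$; the remaining case $v=r$ follows from $\delta(\chi_V)=0$, which gives $\delta(\chi_{\{r\}}) = -\sum_{v\neq r}\delta(\chi_{\{v\}})\in L(G,T)$.

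The only delicate point is tracking the sign in $\delta(\chi_{S_v}) = \pm b_{T,e_v}$, which depends on whether the subtree $S_v$ coincides with $T_{e_v}^+$ or $T_{e_v}^-$ under the convention $e^+=\max\{v,w\}$. Once a rooted structure is fixed this is pure bookkeeping, so I do not foresee any serious obstacle beyond the identification of $L(G,T)$ with the cocycle lattice.
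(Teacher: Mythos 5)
Your proof is correct, but it takes a genuinely different route from the paper's. The paper proves the statement by a tree-exchange argument: it uses the fact that any two spanning trees are connected by a sequence of elementary swaps $T \mapsto T\setminus\{e\}\cup\{f\}$, and then performs a local computation (reducing ultimately to the complete graph on three vertices) to show that a single swap changes each basis vector $b_{T,g}$ by $b_{T',g} = b_{T,g} + \alpha\, b_{T,e}$ with $\alpha\in\{-1,0,+1\}$, hence preserves the generated lattice. You instead identify $L(G,T)$ with an intrinsic object, the cocycle (cut) lattice $\Lambda(G)=\delta(\Z^V)$, by checking $b_{T,e}=\delta(\chi_{T^+_e})$ directly from the sign conventions, and then use a rooted-tree telescoping $\chi_{\{v\}}=\chi_{S_v}-\sum_{w\,\text{child of}\,v}\chi_{S_w}$ together with $\delta(\chi_V)=0$ to express every generator of $\Lambda(G)$ over the $b_{T,e}$. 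Both arguments are sound; yours has the advantage of being coordinate-free and revealing what $L(G,T)$ actually is (which is also conceptually consistent with the later appearance of the incidence matrix $D_G$ in Section~\ref{graphs hyperplanes}, since $\delta$ is precisely $D_G^T$), and it avoids relying on the unproved exchange property of spanning trees; the paper's version stays closer to the raw definition of $b_{T,e}$ and requires no auxiliary construction. One small point worth making explicit in a polished write-up: the identification $b_{T,e}=\delta(\chi_{T^+_e})$ requires checking all four sign cases (including the two where both endpoints of $f$ lie on the same side), which you gesture at but do not display.
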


\begin{proof}
Since one can connect any two spanning trees by a sequence of
transformations of the form $T \leftrightarrow T \setminus \{e\} \cup
\{f\}$ it suffices to prove the proposition
for $T' = T \setminus \{e\} \cup \{f\}$.
Let $g \in T'$.
If $g=f$, then $b_{T',f} = \pm b_{T,e}$.
If $g \in T$, then denote by $C$ the cycle containing $e$ and $f$.
If $g\notin C$ then $b_{T',g}=b_{T,g}$.
The subgraph of $G$ with edge set $T\setminus \{e,g\}$ has three connected
components, denoted by $C_1$, $C_2$, $C_3$.
Given $h=\{v,w\}\in E$, the value of $b_{T', g}(h)$, $b_{T, g}(h)$
and $b_{T, e}(h)$ depends only on which connected component $v$ and $w$
belong to.
So, in computing $b_{T', g}$, we can reduce ourselves to the
case when $G$ is the complete graph on $\{1,2,3\}$, $g=\{1, 3\}$,
$e=\{1, 2\}$ and $f=\{2, 3\}$.
Easy computation gives $b_{T', g}=b_{T, g}+b_{T,e}$ and
so we conclude that $b_{T',g} = b_{T,g} + \alpha b_{T,e}$ with
$\alpha \in \{-1,0,+1\}$.
\end{proof}

In the following we omit the spanning tree $T$ from the notation
$L(G,T)$ and just write $L(G)$. Note that one can find a basis of $L(G)$
given $G$ in polynomial time.

\subsection{From graphs to hyperplane arrangements}
\label{graphs hyperplanes}

A matrix
\begin{equation}\label{matrix V}
V = (v_1, \ldots, v_m) \in \R^{n \times m}
\end{equation}
with non-zero column vectors $v_i \in \R^n$ gives an {\em arrangement of hyperplanes}
\begin{equation*}
H(V) = \{H_1, \ldots, H_m\}\quad\mbox{with}\quad  H_i = \{c \in \R^n : c \cdot v_i = 0\}. 
\end{equation*}
The hyperplane arrangement $H(V)$ divides the space $\R^n$ into
polyhedral cones, called {\em regions}, of different dimensions.
The regions are partially ordered by inclusion and full-dimensional regions
are called {\em chambers}.

To associate a hyperplane arrangement $H(G)$ with $G$ we consider the
incidence matrix $D_G \in \R^{V \times E}$ of $G$ which is given by
\begin{equation*}
D_G(v,e) =
\left\{
\begin{array}{cl}
1, & \text{if $v = e^+$,}\\
-1, & \text{if $v = e^-$,}\\
0, & \text{otherwise.}
\end{array}
\right.
\end{equation*}
Then we define the hyperplane arrangement of $G$ by $H(G) = H(D_G)$.

In \cite[Lemma 7.1]{greene-zaslavsky-1983} Greene and Zaslavsky show
that the chambers of $H(G)$ are in bijection with the
acyclic orientations of $G$: Let $\Vec{E}$ be an acyclic orientation of
$E$.
Then a chamber of $H(G)$ is given by
\begin{equation*}
\treg(\Vec{E}) = \{x \in \R^V : \text{$x_v < x_w$ if $(v,w) \in \Vec{E}$}\}.
\end{equation*}
Let $R$ be a chamber of $H(G)$. Then an acyclic
orientation of $E$ is given by
\begin{equation*}
\Vec{E}(R) = \{(v,w) : \text{$\{v,w\} \in E$ and $x_v < x_w$ for every $x \in
R$}\}.
\end{equation*}
Obviously, $\treg(\Vec{E}(R)) = R$.

\subsection{Hyperplane arrangements and zonotopes}
\label{hyperplanes zonotopes}

The matrix $V$ in \eqref{matrix V} defines a {\em zonotope} $Z(V)$ by
\begin{equation*}
Z(V) = \left\{\sum_{i=1}^m \alpha_i v_i : -1 \leq \alpha_i \leq 1\right\}.
\end{equation*}
The faces of $Z(V)$ are partially ordered by inclusion.
It is a well-known fact (see e.g. \cite[Theorem 7.16]{ziegler-1995}) that the
partially ordered set of regions of the hyperplane arrangement
$H(V)$ is anti-isomorphic to the partially ordered set of 
faces of $Z(V)$:
Let $R$ be a region of $H(V)$. Let $x\in R$.
Then the corresponding face $\tface(R)$ of $Z(V)$ given by
\begin{equation*}
\tface(R) = \left\{y \in Z(V) : x \cdot y = \max_{z \in Z(V)} x \cdot z\right\},
\end{equation*}
does not depend on the choice of $x$.
Let $F$ be a face of $Z(V)$. Let $y$ be in the relative interior of $F$.
Then the corresponding region $\treg(F)$ of $H(V)$ given by
\begin{equation*}
\treg(F)=\left\{x \in \R^n : \max_{z \in Z(V)} x \cdot z = x \cdot y\right\},
\end{equation*}
does not depend on the choice of $y$.
Obviously, $\tface(\treg(F)) = F$ and $F' \subseteq F$ if and
only if $\treg(F') \supseteq \treg(F)$.
In particular, the chambers of $H(V)$ are in bijection
with the vertices of $Z(V)$.

\subsection{From lattices to zonotopes}
\label{lattices zonotopes}

Let $L \subseteq \R^n$ be a lattice. The {\em support} of a vector $v \in L$
is $\underline{v} = \{i \in \{1,\ldots,n\} : v_i \neq 0\}$. The vector $v$ is
called {\em elementary} if $v \in \{-1,0,+1\}^n \setminus \{0\}$ and if $v$
has minimal support among all vectors in~$L \setminus \{0\}$. We
say that two vectors $v, w \in L$ are {\em conformal} if $v_i w_i \geq 0$
for all $i = 1, \ldots, n$. The lattice $L$ is called {\em regular} if
for every vector $v \in L \setminus \{0\}$ there exists an elementary
vector $u \in L$ with $\underline{u} \subseteq \underline{v}$.

\begin{lemma}\label{LemmaRegularLattices}
{\rm (\cite[Chapter 1]{tutte-1971})}

(i) For any graph $G$ the lattice $L(G)$ is regular.

(ii) If $L$ is a regular lattice, then every $v\in L$ can be written as a sum of pairwise conformal elementary vectors.

(iii) If $L$ is a regular lattice, $v\in L$ is elementary, and $u\in L$ satisfies $\underline{u}=\underline{v}$, then there exists a
factor $\alpha \in \Z$ such that $u = \alpha v$.
\end{lemma}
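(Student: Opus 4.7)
I would handle the three parts separately, starting with the cleanest, which is (iii). Fix any $i \in \underline{v}$; since $v$ is elementary, $v_i \in \{\pm 1\}$, and $u_i \in \Z$ (in the paper's setting $L \subseteq \Z^n$). The vector $w := u - (u_i v_i)\, v$ then lies in $L$ and satisfies $w_i = 0$, so $\underline{w} \subsetneq \underline{u} = \underline{v}$. If $w$ were nonzero, regularity would produce an elementary $u' \in L$ with $\underline{u'} \subseteq \underline{w} \subsetneq \underline{v}$, contradicting the minimality of $\underline{v}$ among supports of nonzero lattice vectors. Hence $w = 0$, and $\alpha = u_i v_i \in \Z$ is the desired factor.

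For (i), I would identify $L(G)$ with the signed cocycle (cut) lattice of the oriented graph $G$: each generator $b_{T,e}$ is the signed indicator vector of the fundamental cocycle of $e$ with respect to the spanning tree $T$. The elementary vectors of $L(G)$ are then the signed indicator vectors of bonds of $G$ (minimal nonempty cocycles). For any nonzero $v \in L(G)$, the support $\underline{v}$ is itself a cocycle and, by a standard fact of graph theory, contains a bond $B$; the corresponding signed $\pm 1$-vector lies in $L(G)$ and is elementary with support in $\underline{v}$.

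For (ii), I would induct on $\|v\|_1 = \sum_i |v_i|$, with base case $v = 0$. The key subclaim is that every nonzero $v \in L$ admits an elementary $u \in L$ with $\underline{u} \subseteq \underline{v}$ that is \emph{conformal} to $v$. Granted this, the residual $v' := v - u$ satisfies $\|v'\|_1 < \|v\|_1$ and is conformal to $u$ (since on $\underline{u}$ the signs of $v_i$ and $u_i$ agree and $|v_i| \geq |u_i| = 1$), so by induction $v'$ decomposes as a pairwise conformal sum of elementary vectors each conformal to $v'$, hence to $u$; appending $u$ yields the decomposition of $v$.

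The main obstacle is the subclaim itself: regularity provides only some elementary $u_0$ with $\underline{u_0} \subseteq \underline{v}$, with no sign control. The classical resolution (Minty's painting lemma for regular chain groups, in Tutte's formulation) starts from such a $u_0$; whenever a sign conflict with $v$ occurs at some coordinate $j \in \underline{u_0}$, one applies regularity to an appropriate lattice combination (e.g.\ of $v$ with $u_0$ designed to kill $v_j$) to produce a second elementary vector, uses part (iii) to argue that any two elementary vectors sharing a support are signed multiples of each other, and iterates to eliminate sign conflicts one by one. This painting argument is the technical heart of the proof; the rest is bookkeeping.
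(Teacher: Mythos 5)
The paper does not prove this lemma at all; it is cited verbatim from Tutte's monograph, so there is no argument of the authors' to compare yours against. Judging your attempt on its own merits: your proof of (iii) is complete and correct — it is the standard argument (subtract an integer multiple to create a strictly smaller support, then invoke regularity and minimality). Your treatment of (i) has the right shape but rests on the unproven assertion that the support of any nonzero vector in the cut space of a connected graph contains a bond. The intermediate claim that $\underline{v}$ ``is itself a cocycle'' is in fact false in general: the support of an integer combination of vertex cuts need not be an edge cut (in $K_4$, the vector $2\delta(\{1\}) + \delta(\{2\})$ has five edges in its support, and no edge cut of $K_4$ has five edges). What is true, and what you actually need, is only that the support contains a bond; that statement deserves a proof (e.g.\ via a max--level-set argument on the potential $y$ with $v = D_G^{T}y$) rather than a pointer to folklore.

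The genuine gap is in (ii). Your induction on $\|v\|_1$ is sound, and the bookkeeping is correct: if $u$ is elementary, conformal to $v$, with $\underline{u}\subseteq\underline{v}$, then $v-u$ is conformal to both $u$ and $v$, has strictly smaller $\ell^1$-norm, and (together with pairwise conformality of the decomposition of $v-u$) the pieces are conformal to $u$ as well. But the crux — that a nonzero $v$ in a regular lattice admits an elementary $u$ with $\underline{u}\subseteq\underline{v}$ which is moreover \emph{conformal} to $v$ — is exactly the step you label ``the technical heart of the proof'' and then defer to Minty's arc-colouring lemma without carrying it out. Regularity only delivers an elementary vector with controlled support, not controlled signs; bridging that gap requires a real argument (Tutte's treatment of this point is several lemmas long). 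As it stands, (ii) is an outline, not a proof. Since the paper itself offers only a citation, this is not a deviation from the paper's approach, but it does mean your proposal is not self-contained.
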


A vector $v \in L$ for which $\DV(L) \cap \{x \in \R^n : x \cdot v =
\frac{1}{2} v \cdot v\}$ is a facet of $\DV(L)$ is called
{\em relevant}.
Voronoi characterizes in \cite[page 277]{voronoi-1908} the relevant
vectors of $L$: A nonzero vector $v \in L$ is relevant if and only if
$\pm v$ are the only shortest vectors in $v + 2L$.

\begin{proposition}
\label{strict Voronoi}
In a regular lattice, a vector is elementary if and only if it is relevant.
\end{proposition}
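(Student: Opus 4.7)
The plan is to use Voronoi's criterion: a nonzero $v\in L$ is relevant iff $\pm v$ are the only shortest vectors in the coset $v + 2L$. For any $w = v + 2u \in v+2L$ with $u\in L$ we have
\begin{equation*}
\|w\|^2 - \|v\|^2 = 4\,u\cdot(u+v) = 4\sum_i u_i(u_i+v_i).
\end{equation*}
This quantity will be the workhorse for both directions.

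For the ``elementary $\Rightarrow$ relevant'' direction, assume $v$ is elementary, so $v_i\in\{-1,0,1\}$. Then an easy case analysis on each coordinate (using that $u_i\in\Z$ and $t(t+\varepsilon)\geq 0$ for all $t\in\Z$ and $\varepsilon\in\{-1,0,1\}$, with equality exactly when $t=0$ or $t=-\varepsilon$) shows that every summand $u_i(u_i+v_i)$ is nonnegative. Hence $\|w\|\geq\|v\|$, so $v$ is itself a shortest vector in $v+2L$. If $\|w\|=\|v\|$, each summand vanishes, which forces $u_i=0$ whenever $v_i=0$, i.e.\ $\underline u\subseteq \underline v$. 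Minimality of $\underline v$ leaves only $u=0$ or $\underline u=\underline v$; in the latter case Lemma~\ref{LemmaRegularLattices}(iii) gives $u=\alpha v$, and the coordinatewise equality condition forces $\alpha=-1$. So $w=\pm v$, proving relevance.

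For the ``relevant $\Rightarrow$ elementary'' direction, I would invoke Lemma~\ref{LemmaRegularLattices}(ii) to decompose $v = u_1+\cdots+u_k$ into pairwise conformal elementary vectors and show that $k=1$ is forced. Set $w = v - 2u_1 = -u_1 + \sum_{i\geq 2} u_i$; clearly $w\in v+2L$, and a short computation gives
\begin{equation*}
\|w\|^2 - \|v\|^2 = -4\,u_1\cdot\sum_{i\geq 2}u_i \leq 0,
\end{equation*}
because conformality makes every coordinatewise product $(u_1)_j(u_i)_j$ nonnegative. Since $v$ is relevant, $w=\pm v$. The case $w=v$ gives $u_1=0$, impossible. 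The case $w=-v$ gives $\sum_{i\geq 2}u_i=0$; coordinatewise, all summands have the same sign (again by conformality), so each $u_i=0$ for $i\geq 2$, also impossible unless $k=1$. Hence $v=u_1$ is elementary.

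The main subtlety I anticipate is making precise the coordinate sign arguments: in the first direction I need to check all three cases $v_i\in\{-1,0,1\}$ to see the sum is a sum of nonnegative integers, and in the second direction I must use that conformality, although a pairwise condition, implies that in each fixed coordinate all the $(u_i)_j$ share a common sign, so that a vanishing sum forces each term to vanish. Once these sign bookkeeping points are handled, everything else is just Lemma~\ref{LemmaRegularLattices} plus the quadratic identity above.
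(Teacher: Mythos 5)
Your proof is correct and follows essentially the same route as the paper: both directions hinge on Voronoi's characterization of relevant vectors, with the ``relevant $\Rightarrow$ elementary'' step using the conformal decomposition from Lemma~\ref{LemmaRegularLattices}(ii) exactly as the paper does, and the ``elementary $\Rightarrow$ relevant'' step reducing to a support comparison and Lemma~\ref{LemmaRegularLattices}(iii). The only cosmetic difference is that in the second direction you work with the increment $u=(w-v)/2\in L$ and expand the quadratic identity coordinatewise, whereas the paper works directly with $w\in v+2L$ via the parity observation $\underline{v}\subseteq\underline{w}$; in fact your coordinatewise equality analysis already forces $u_i\in\{0,-v_i\}$, so once $\underline u=\underline v$ you get $u=-v$ outright and the appeal to Lemma~\ref{LemmaRegularLattices}(iii) is not even needed there.
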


\begin{proof}
Let $v \in L$ be a relevant vector.
By Lemma \ref{LemmaRegularLattices} (ii), we can write $v = \sum_{k=1}^m w_k$ as a
sum of pairwise conformal elementary vectors $w_k \in L$.  Assume that
$m \geq 2$. Defining $u = v - 2w_1$ gives $u \neq \pm v$ and $u \cdot
u = v \cdot v - 4(v - w_1) \cdot w_1$.  Since the vectors $w_k$,
$k = 1, \ldots, m$, are pairwise conformal we have
$(v - w_1) \cdot w_1 \geq 0$, and $\pm v$ is not the unique shortest
vector in $v + 2L$.
In this case $v$ cannot be a relevant vector. Hence, $m = 1$ and $v$ is
an elementary vector.

Let $v \in L$ be an elementary vector, and let $u \in v + 2L$ be a
lattice vector with $u \neq \pm v$.
We have $v - u \in 2L \subseteq 2\Z^n$ and $v_i \in \{-1,0,+1\}$,
which shows $\underline{v} \subseteq \underline{u}$.
The case $\underline{v} \neq \underline{u}$
immediately leads to $v\cdot v < u\cdot u$.  If $\underline{v} =
\underline{u}$, then by Lemma \ref{LemmaRegularLattices} (iii), 
there exists a factor $\alpha \in \Z \setminus
\{-1,+1\}$ so that $u = \alpha v$, hence $v\cdot v < u\cdot u$.
In both cases $\pm v$ are the only shortest vectors in $v + 2L$.
Hence, $v$ is a relevant vector.
\end{proof}

The following special case of the Farkas lemma is proved e.g.\ in
\cite[Theorem 22.6]{rockafellar-1970}.

\begin{lemma}
\label{farkas}
Let $L \subseteq \R^n$ be a regular lattice.  Let $x \in \R^n$ be a
vector, and let $\alpha_1, \ldots, \alpha_n \in \R \cup
\{\pm\infty\}$.  Either there exists a vector $y' \in (\lin
L)^{\perp}$ lying in $x + \prod_{i=1}^n [-\alpha_i, \alpha_i]$, or
there exists a vector $y \in \lin L$ such that for all $z \in x +
\prod_{i=1}^n [-\alpha_i, \alpha_i]$ the inequality $y \cdot z > 0$
holds.  If the second condition holds, then one can choose $y$ to be
an elementary vector of $L$.
\end{lemma}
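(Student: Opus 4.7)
The plan is to deduce the dichotomy directly from the cited classical theorem of the alternative (Rockafellar 22.6) and then refine a generic separator to an elementary vector by applying the conformal decomposition from Lemma \ref{LemmaRegularLattices}(ii).

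First step: applied to the subspace $(\lin L)^\perp$ and the box $B = x + \prod_i [-\alpha_i, \alpha_i]$, the standard theorem of the alternative yields exactly the stated dichotomy, with some $y \in \lin L$ satisfying $y \cdot z > 0$ for all $z \in B$. A direct minimization over the box rewrites this as the scalar inequality
\[
y \cdot x \;>\; \sum_i \alpha_i |y_i|,
\]
under the convention $\alpha_i |y_i| = 0$ when $\alpha_i = +\infty$ and $y_i = 0$; in particular $y_i$ must vanish at every index $i$ with $\alpha_i = +\infty$.

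Next, replace $y$ by a lattice vector. Since $L$ is regular, Lemma \ref{LemmaRegularLattices} shows it is generated by elementary $\{-1,0,+1\}$-vectors, so $\lin L$ is $\Q$-defined and the rational span $\Q \otimes_\Z L$ is dense in $\lin L$. The further linear subspace $W \subseteq \lin L$ cut out by the vanishing conditions at indices with $\alpha_i = +\infty$ is also $\Q$-defined, so $y$ may be approximated arbitrarily closely by a vector in $(\Q \otimes_\Z L) \cap W$. A sufficiently close approximation still satisfies the displayed strict inequality, and after clearing denominators we may assume $y \in L$.

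Finally, apply Lemma \ref{LemmaRegularLattices}(ii) to write $y = \sum_{k=1}^r w_k$ as a sum of pairwise conformal elementary vectors of $L$. Conformality gives $|y_i| = \sum_k |w_{k,i}|$ coordinate-wise, and hence
\[
0 \;<\; y \cdot x - \sum_i \alpha_i |y_i| \;=\; \sum_{k=1}^r \Bigl( w_k \cdot x - \sum_i \alpha_i |w_{k,i}| \Bigr).
\]
At least one summand on the right must therefore be positive, so the corresponding $w_k$ is an elementary vector of $L$ with $w_k \cdot z > 0$ for every $z \in B$. The only delicate point is the approximation step when some $\alpha_i$ is infinite: preserving the strict vanishing $y_i = 0$ at those coordinates is crucial, for otherwise the perturbed vector would fail the inequality along the unbounded directions of $B$. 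Once inside the lattice, the conformal decomposition makes the final selection automatic.
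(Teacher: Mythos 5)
Your argument is correct, and it supplies a self-contained proof where the paper only points to \cite[Theorem 22.6]{rockafellar-1970}. Your organization is slightly different from Rockafellar's: he proves the alternative directly at the level of elementary vectors of the \emph{subspace} $\lin L$, whereas you start from a generic separating functional (any standard polyhedral strong-separation / theorem-of-the-alternative argument for the disjoint polyhedra $(\lin L)^\perp$ and $B$ forces the separating normal $y$ into $((\lin L)^\perp)^\perp = \lin L$, with the scalar form $y\cdot x > \sum_i \alpha_i|y_i|$ and $y_i=0$ on the unbounded coordinates) and only afterwards refine $y$. Two features of your route are worth flagging. First, the rational-approximation step is a genuine extra wrinkle that the subspace version does not need: it is required precisely because Lemma~\ref{LemmaRegularLattices}(ii), as stated in the paper, decomposes only \emph{lattice} vectors into conformal elementary vectors. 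If one instead applied Rockafellar's theorem verbatim, one would obtain a support-minimal vector of the subspace $\lin L$ and would then have to argue (via regularity and Lemma~\ref{LemmaRegularLattices}) that it is proportional to a $\{-1,0,+1\}$ lattice vector; your detour through $(\QQ\otimes_\ZZ L)\cap W$ and clearing denominators sidesteps that identification and stays entirely within the machinery the paper develops. Second, you correctly isolate the delicate point: the perturbed $y'$ must still vanish on the coordinates where $\alpha_i=+\infty$, which is why the approximation must be taken inside the rational subspace $W$ rather than in all of $\QQ\otimes_\ZZ L$. The final conformal-splitting estimate $0 < y\cdot x - \sum_i\alpha_i|y_i| = \sum_k\bigl(w_k\cdot x - \sum_i\alpha_i|w_{k,i}|\bigr)$ is exactly the right use of pairwise conformality ($|y_i|=\sum_k|w_{k,i}|$), and picking a positive summand gives the elementary separator. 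No gaps.
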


\begin{theorem}
\label{projection}
Let $L \subseteq \R^n$ be a regular lattice.  Let $P \in \R^{n \times
n}$ be the matrix of the orthogonal projection of $\R^n$ onto $\lin L$.
Then, $\DV(L) = \frac{1}{2}Z(P) = P([-1/2,1/2]^n)$.
\end{theorem}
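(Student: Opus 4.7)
The plan is to prove the two equalities separately, with most of the work going into the first. Note first that the second equality $\tfrac{1}{2}Z(P) = P([-1/2,1/2]^n)$ is essentially tautological: the columns of $P$ are $Pe_1,\ldots,Pe_n$, so
\[
Z(P) = \Bigl\{\sum_{i=1}^n \alpha_i Pe_i : -1 \leq \alpha_i \leq 1\Bigr\} = P([-1,1]^n),
\]
and rescaling by $1/2$ finishes it off.

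For the nontrivial equality $\DV(L) = P([-1/2,1/2]^n)$, I would combine Voronoi's characterization of relevant vectors with Proposition~\ref{strict Voronoi} to say $\DV(L) = \{x \in \lin L : x \cdot v \leq \tfrac{1}{2} v \cdot v \text{ for every elementary } v \in L\}$. To show $P([-1/2,1/2]^n) \subseteq \DV(L)$, take any $z \in [-1/2,1/2]^n$ and any elementary $v \in L$. Since $v \in \lin L$, we have $Pz \cdot v = z \cdot v$, and because $v \in \{-1,0,+1\}^n$,
\[
|Pz \cdot v| \leq \sum_{i} |z_i|\,|v_i| \leq \tfrac{1}{2}\,|\underline{v}| = \tfrac{1}{2}\, v \cdot v,
\]
so $Pz$ lies in $\DV(L)$.

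The reverse inclusion is the real content, and this is where the Farkas-type alternative in Lemma~\ref{farkas} is used. Given $x \in \DV(L)$, I want to find $z \in [-1/2,1/2]^n$ with $Pz = x$; equivalently, to intersect $x + (\lin L)^\perp$ with the box $[-1/2,1/2]^n$, or, reflecting, to find some $y' \in (\lin L)^\perp$ with $y' \in x + [-1/2,1/2]^n$. Apply Lemma~\ref{farkas} with $\alpha_i = 1/2$. If the first alternative holds then setting $z = x - y'$ gives $z \in [-1/2,1/2]^n$ and $Pz = x$ (since $x \in \lin L$ and $Py' = 0$). If the second alternative holds, there is an elementary $y \in L$ with $y \cdot z > 0$ for every $z$ in the closed box $x + [-1/2,1/2]^n$; taking the minimum (attained at a corner) yields
\[
y \cdot x - \tfrac{1}{2}\sum_i |y_i| \;=\; y \cdot x - \tfrac{1}{2}\, y \cdot y \;>\; 0,
\]
which contradicts the facet inequality $y \cdot x \leq \tfrac{1}{2}\, y \cdot y$ that $x$ must satisfy, since $y$ is relevant by Proposition~\ref{strict Voronoi}.

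The main obstacle is the delicate matching of the Farkas alternative to the geometric question: one has to translate ``$x$ lies in $P([-1/2,1/2]^n)$'' into the existence of a point of $(\lin L)^\perp$ inside a translated symmetric box so that Lemma~\ref{farkas} can be invoked, and then recognize that the witness $y$ produced by the alternative is automatically a relevant vector. Once that bookkeeping is in place, both directions reduce to the elementary estimate $|z \cdot v| \leq \tfrac{1}{2}\, v \cdot v$ for $v \in \{-1,0,+1\}^n$.
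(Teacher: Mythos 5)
Your proof is correct and follows the paper's approach: both directions hinge on the box $[-1/2,1/2]^n$ and Lemma~\ref{farkas}, with the second alternative of the Farkas lemma yielding an elementary (hence lattice) vector that violates a Voronoi inequality. The only cosmetic differences are that the paper's forward inclusion argues directly from $x\cdot v\le\tfrac12 v\cdot v$ for all $v\in\Z^n\setminus\{0\}$ without passing through Proposition~\ref{strict Voronoi}, and that the paper applies the Farkas lemma at $-y$ and invokes central symmetry of $\DV(L)$, whereas you apply it at $x$ and reach the contradiction directly; neither change affects the substance.
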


\begin{proof}
Suppose that $x \in [-1/2,1/2]^n$. For all $v \in \Z^n \setminus \{0\}$
the inequality $x \cdot v \leq \frac{1}{2} v \cdot v$ holds.
Write $x = y + y'$ with $y = Px \in \lin L$ and
$y' \in (\lin L)^{\perp}$. For all $v \in L \setminus
\{0\}$ we have $y \cdot v = x \cdot v - y' \cdot v \leq
\frac{1}{2} v \cdot v$. Thus, $Px \in \DV(L)$.

Suppose now that $y \in \DV(L)$. If there exists $x \in (-y + [-1/2,
1/2]^n) \cap (\lin L)^{\perp}$, then $y + x \in [-1/2,1/2]^n$ and $P(y
+ x) = y$. Assume that such a vector does not exist. Then by
Lemma~\ref{farkas} there is an elementary lattice vector $v \in L$ so
that $v \cdot (-y + [-1/2,1/2]^n) > 0$. This implies $v \cdot (-y -
\frac{1}{2} v) > 0$. Hence, $-y \not\in \DV(L)$. Since $\DV(L)$ is centrally symmetric,
this contradicts the assumption $y \in \DV(L)$.
\end{proof}

In \cite[Proposition 8.1]{biggs-1997} Biggs shows that for the lattice
$L(G)$ the matrix $P$ can be written in the form $P = XD_G$ where $D_G
\in \R^{V \times E}$ is the incidence matrix of $G$ and
$X \in \R^{E \times V}$ is given by
\begin{equation}
\label{eq:Xev}
X(e,v) = \frac{\text{number of spanning trees $T$ with $e \in T$ and
$v \in T^+_e$}}{\text{number of spanning trees of $G$}}.
\end{equation}
Furthermore, the linear map given by $X$ restricted to the image of
$D_G$ is a bijection.

Thus, the zonotope $Z(P)$ which is the Voronoi cell of $L(G)$ equals
$\frac{1}{2}X Z(D_G)$. Hence, there is a linear isomorphism between
the faces of $\DV(L(G))$ and those of $Z(D_G)$. This completes the
proof of Theorem~\ref{sharpp}.

\medskip

Using a straightforward computation we get the following proposition.

\begin{proposition}\label{PropositionCoveringRadius}
Using the notation in \eqref{eq:Xev}, the covering radius of the lattice
$L(G)$ is given by
\begin{equation}
\label{eq:noidea}
\mu(L(G))^2 = \max_{x \in [-1/2,1/2]^E} \sum_{e \in E} \left(\sum_{f \in E} (X(e,f^+) -
X(e,f^-)) x(f) \right)^2.
\end{equation}
\end{proposition}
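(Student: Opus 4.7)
The plan is to chain together two earlier results and then unpack the matrix product $P = X D_G$ componentwise.

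First, recall that the covering radius equals the circumradius of the Voronoi cell, so
\[
\mu(L(G))^2 \;=\; \max_{y \in \DV(L(G))} \|y\|^2.
\]
Since $L(G)$ is regular by Lemma~\ref{LemmaRegularLattices}(i), I would invoke Theorem~\ref{projection} to rewrite $\DV(L(G)) = P([-1/2,1/2]^E)$, giving
\[
\mu(L(G))^2 \;=\; \max_{x \in [-1/2,1/2]^E} \|Px\|^2.
\]

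The remaining step is purely algebraic: substitute Biggs' factorization $P = X D_G$ and expand one coordinate at a time. For fixed $e \in E$, I would compute
\[
(Px)(e) \;=\; \sum_{v \in V} X(e,v)\, (D_G x)(v) \;=\; \sum_{v \in V} X(e,v)\!\left(\sum_{f \in E} D_G(v,f)\, x(f)\right),
\]
then swap the order of summation and use the definition $D_G(v,f) = \delta_{v,f^+} - \delta_{v,f^-}$ to collapse the $v$-sum, obtaining
\[
(Px)(e) \;=\; \sum_{f \in E} \bigl(X(e,f^+) - X(e,f^-)\bigr)\, x(f).
\]
Squaring and summing over $e$ yields $\|Px\|^2$ in the form displayed in \eqref{eq:noidea}, completing the proof.

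There is no real obstacle here; the only thing one has to be careful about is that the projection matrix $P$ really lives in $\R^{E \times E}$ (so the ambient space is indexed by edges, not vertices), and that the composition $X D_G$ is taken in the right order. Once the index bookkeeping is set up correctly the calculation is immediate.
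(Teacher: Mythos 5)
Your argument is correct and is exactly the ``straightforward computation'' the paper alludes to but does not write out: combine the circumradius characterization of $\mu$, the zonotope description $\DV(L(G)) = P([-1/2,1/2]^E)$ from Theorem~\ref{projection} (applicable since $L(G)$ is regular by Lemma~\ref{LemmaRegularLattices}(i)), and Biggs' factorization $P = X D_G$, then expand the matrix product entrywise using $D_G(v,f) = \delta_{v,f^+} - \delta_{v,f^-}$. The index bookkeeping ($X \in \R^{E\times V}$, $D_G \in \R^{V\times E}$, $P \in \R^{E\times E}$) is handled correctly, so the proof is complete.
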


Unfortunately, we do not have a combinatorial interpretation of
\eqref{eq:noidea}. Finding one could lead to a proof of the
$\NP$-hardness of the covering radius problem.

\subsection{Lattice isomorphism problem}
\label{byproduct1}

Using the construction $L(G)$ used in the proof of Theorem~\ref{sharpp}, we
reduce the graph isomorphism problem to the lattice isomorphism problem
in polynomial time.
We don't know whether one can give a
reverse polynomial time reduction. For the graph isomorphism problem
no polynomial time algorithm is known. It is generally believed to lie
in $\NP \cap \mbox{co-}\NP$. So it is unlikely that it is
$\NP$-hard. For more information on the computational complexity of
this problem, see the book \cite{kst-1993} of K\"obler, Sch\"oning and
T\'oran.

\begin{problem} Lattice isomorphism problem
\begin{description}
\item[Input] $m,n, B, B' \in \Q^{m \times n}$ matrices of rank $m$.
\item[Output] \textsf{Yes}, if there is an orthogonal transformation
$O$ so that $OB\Z^n = B'\Z^n$, \textsf{No} otherwise.
\end{description}
\end{problem}

\begin{problem} Graph isomorphism problem
\begin{description}
\item[Input] Graphs $G = (V, E_G)$, $H = (V, E_H)$.
\item[Output] \textsf{Yes}, if there is a permutation $\sigma : V \to
V$ so that for all $v, w \in V$ we have $\{v,w\} \in E_G$ if and only
if $\{\sigma(v), \sigma(w)\} \in E_H$, \textsf{No} otherwise.
\end{description}
\end{problem}

\begin{theorem}
There is a polynomial time reduction of the graph isomorphism problem
to the lattice isomorphism problem.
\end{theorem}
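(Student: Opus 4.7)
The plan is to produce, from graphs $G_1, G_2$, the lattices $L(G_1), L(G_2)$ of Section~\ref{graph lattices} together with the bases computable from a spanning tree in polynomial time, preceded by the classical polynomial-time reduction of graph isomorphism to 3-connected graph isomorphism. I then need to show $G_1 \cong G_2$ if and only if there exists orthogonal $O$ with $OL(G_1) = L(G_2)$. The forward direction is short: a graph isomorphism $\sigma\colon V \to V$ yields an edge bijection $\tilde\sigma\colon E_{G_1}\to E_{G_2}$, and after identifying both edge sets with $\{1,\dots,m\}$, the associated coordinate permutation on $\R^m$ (modified by sign changes where needed to reconcile the $\max/\min$ convention defining $e^\pm$) is orthogonal and carries $L(G_1)$ to $L(G_2)$, since the cut space depends only on the combinatorial structure of $G$.

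For the converse, suppose $O \in \Orth(m)$ satisfies $OL(G_1) = L(G_2)$. Each $L(G_i)$ is regular by Lemma~\ref{LemmaRegularLattices}(i), so by Proposition~\ref{strict Voronoi} the relevant vectors of $\DV(L(G_i))$ coincide with its elementary vectors; because $O$ preserves Voronoi cells and inner products, it induces a bijection of elementary vectors preserving all pairwise inner products. Using the cut-space interpretation of $L(G)$, these elementary vectors are precisely the $\pm$ signed indicator vectors of the bonds (minimal cuts) of $G_i$, so I get a distinguished bijection between the bonds of $G_1$ and those of $G_2$. I would then reconstruct an isomorphism of the graphic matroids $M(G_1) \cong M(G_2)$ from this bond bijection together with the preserved inner products, and conclude via the classical Whitney 2-isomorphism theorem: 2-isomorphism of $G_1, G_2$ follows, and because of the 3-connected preprocessing it coincides with honest graph isomorphism.

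The main obstacle is the matroid reconstruction step, because $O$ need not be a signed coordinate permutation and so a bijection of bonds does not automatically refine to a bijection of edges. The plan to address this is to recover the edge indexing intrinsically: each edge $e$ corresponds to the subset of elementary vectors with nonzero $e$-th coordinate, and this subset should be characterizable from the pairwise inner product data on the elementary vectors alone; in effect, one recovers the oriented matroid of the regular lattice $L(G)$ from its orthogonal equivalence class. Once that is in hand, the matroid isomorphism and Whitney's theorem supply the graph isomorphism, completing the reduction.
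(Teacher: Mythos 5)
Your plan has the same skeleton as the paper's proof: pre-process to $3$-connected graphs, observe that a graph isomorphism yields (after sign fixes) a signed coordinate permutation carrying $L(G_1)$ to $L(G_2)$, and for the converse deduce a graphic-matroid isomorphism and invoke Whitney's $2$-isomorphism theorem in its easy $3$-connected form. The paper does exactly this (it adds three universal vertices to $3$-connect, and cites \cite[Lemma 5.3.2]{oxley-1992}), and your forward direction is if anything a little more careful than the paper's one-line remark, since you explicitly account for the sign changes caused by the $\max/\min$ orientation convention.

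The difference is in how the converse gap is closed, and here your sketch is under-developed and as stated probably does not work. You propose to recover, for each edge $e$, the set $S_e$ of elementary vectors with nonzero $e$-th coordinate \emph{``from the pairwise inner product data on the elementary vectors alone.''} Pairwise inner products of bond vectors do not by themselves encode the sign pattern: for instance, in $K_4$ the two bond vectors of the cuts $\{1,2\}\,|\,\{3,4\}$ and $\{1,3\}\,|\,\{2,4\}$ have inner product $0$ yet overlapping, non-conformal supports, so a Gram-matrix entry of $0$ does not mean disjoint supports, and in general one cannot read off conformality or the sets $S_e$ coordinatewise from the Gram matrix without further structure. What actually makes the recovery work is the machinery the paper builds in Sections~\ref{graphs hyperplanes}--\ref{lattices zonotopes}: by Theorem~\ref{projection} and Biggs' factorization $P=XD_G$, the Voronoi cell $\DV(L(G))$ is linearly isomorphic to the zonotope $Z(D_G)$; an orthogonal $O$ with $OL(G')=L(H')$ maps $\DV(L(G'))$ onto $\DV(L(H'))$ and hence induces a face-lattice isomorphism $Z(D_{G'})\to Z(D_{H'})$. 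The face lattice of a zonotope determines the underlying oriented matroid (equivalently, the regions of the dual hyperplane arrangement in Section~\ref{graphs hyperplanes}), and in a simple $3$-connected graph the zonotope's parallel classes of edges are in bijection with $E$, so one obtains an edge bijection inducing a graphic (oriented) matroid isomorphism, and Whitney finishes. This is what the paper's terse phrase ``a bijection between the elementary vectors preserving conformality'' is standing in for. So: right strategy, right citation of Whitney, but the specific ``decode from pairwise inner products'' step should be replaced by the zonotope/hyperplane-arrangement argument that the paper has already set up.
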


\begin{proof}
Let $G = (V, E_G)$ and $H = (V, E_H)$ be graphs. We modify $G$ and $H$
by adding three extra vertices to $V$ each adjacent to all vertices in
$V$. We call the new graphs $G'$ and $H'$ which are by construction
$3$-connected and they are isomorphic if and only if $G$ and
$H$ are isomorphic.

It is clear that the lattices $L(G')$ and $L(H')$ defined in
Subsection \ref{graph lattices} are isomorphic
whenever $G'$ and $H'$ are. For this direction it would be enough to
consider the original graphs $G$ and $H$.

Now suppose that the lattices $L(G')$ and $L(H')$ are isomorphic. We
apply the 2-Isomorphism-Theorem of Whitney (actually we only use the easy
subcase of $3$-connected graphs \cite[Lemma 5.3.2]{oxley-1992}):
Because the graphs $G'$ and $H'$ are $3$-connected and there is a
bijection between the elementary vectors preserving conformality, the
graphs $G'$ and $H'$ are isomorphic.
\end{proof}

\section{Algorithms}
\label{sec:algorithm}

In this section we describe an algorithm which computes all vertices
of a lattice Voronoi cell. 
Our focus is on implementability and practical performance, using the
symmetries of the lattice.
In fact, the algorithm computes all full-dimensional
Delone cells and the adjacencies between them up to equivalence.
We give necessary definitions in
Section~\ref{notation}. In Section~\ref{mainalgorithm} we describe the
algorithm's main steps and in the following sections we give details
about its subalgorithms.
In Section \ref{grammatrices}, we explain how to use Gram matrices
instead of lattice
basis and in Section \ref{sec:comparison} we compare our method
with existing algorithms.

\subsection{Notation}
\label{notation}

From now on, we assume lattices $L \subseteq \R^n$ to have full rank $n$.

To encode the vertices of $\DV(L)$ we use Delone cells.
A point $x \in \R^n$ defines a {\em Delone cell} $D(x)$ by
\begin{equation*}
D(x) = \conv\left\{v \in L : \|x - v\| = \min_{w\in L}\|x - w\|\right\}.
\end{equation*}
Denote by $S(x,r)$ the sphere with center $x$ and
radius $r$.
For $r=\min_{v\in L}\|x - v\|$, the
sphere $S(x,r)$ is called {\em empty}, since there is no lattice
point inside.
In this case the polytope $D(x)$ is the convex hull of $S(x,r)\cap L$.
The Delone cell of
a vertex of $\DV(L)$ is characterized among all Delone cells by the following
properties: The origin is a vertex of $D(x)$ and $D(x)$ is full-dimensional.

It is well known (see e.g. \cite{edelsbrunner}) that the Delone
cells are the projections of the faces of the infinite
$(n+1)$-dimensional polyhedral set
\begin{equation*}
\Lift(L)=\conv\left\{ (x, \Vert x\Vert^2) : x\in L\right\}.
\end{equation*}

The task of finding a vertex of a Delone cell of a point $x$, given a
lattice basis of $L$, is called the {\em closest vector problem}.
Generally this is
an NP-hard problem \cite{RefCVP}; however, there are algorithms
and implementations available which can solve this problem 
rather fast in low dimensions.

The {\em orthogonal group} $\Orth(L)$ of $L$ is the group
of all orthogonal transformations $A \in \Orth(\R^n)$ fixing $L$,
i.e.\ $A(L) = L$.
The {\em isometry group} $\Iso(L)$ of $L$ is the group
generated by $\Orth(L)$ and all lattice translations
$t_v : \R^n \to \R^n$ with $t_v(x) = x + v$ for $v \in L$.

We say that two vertices $x$ and $x'$ of $\DV(L)$ are equivalent if there is
an $A \in \Orth(L)$ so that $A(x) = x'$. Correspondingly, we say that two
Delone cells $D(x)$ and $D(x')$ are equivalent if there is an $A \in \Iso(L)$
so that $A(D(x)) = D(x')$.

\subsection{Main algorithm}
\label{mainalgorithm}

Our algorithm finds a complete list of inequivalent
full-dimensional Delone cells of $L$ with respect to $\Iso(L)$.
The enumeration process is a graph traversal algorithm of the graph
of equivalence classes of full-dimensional Delone cells of $L$.
Two equivalence classes are adjacent whenever there is a facet 
between two of its representatives.
Note that this graph can have loops and multiple edges.

For the graph traversal algorithm below one needs four subalgorithms, 
which we explain in the following sections.

\begin{flushleft}
\smallskip
\textbf{Input:} $n$, $B \in \Q^{n \times n}$ matrix of rank $n$.\\
\textbf{Output:} Set~${\mathcal M}$ of all inequivalent full-dimensional Delone cells of the lattice $B\Z^n$ with respect to the group $\Iso(B\ZZ^n)$.\\
\smallskip
$x \leftarrow $ an initial vertex of $\DV(B\Z^n)$. \hfill (Section~\ref{Initial})\\
$T\leftarrow \{D(x)\}$.\\
${\mathcal M} \leftarrow \emptyset$.\\
\textbf{while} there is a $D \in T$ \textbf{do}\\
\hspace{2ex} ${\mathcal M} \leftarrow {\mathcal M} \cup \{D\}$.\\
\hspace{2ex} $T \leftarrow T \setminus \{D\}$.\\
\hspace{2ex} ${\mathcal F} \leftarrow \mbox{facets of $D$}$.\hfill (Section~\ref{Adjacent})\\
\hspace{2ex} \textbf{for} $F \in {\mathcal F}$ \textbf{do}\\
\hspace{2ex} \hspace{2ex} $D' \leftarrow$ full-dimensional Delone cell with $F=D\cap D'$.\hfill (Section~\ref{Adjacent})\\
\hspace{2ex} \hspace{2ex} \textbf{if} $D'$ is not equivalent to a Delone cell in ${\mathcal M}\cup T$ \textbf{then}\hfill (Section~\ref{Equivalence})\\
\hspace{2ex} \hspace{2ex}\hspace{2ex} $T \leftarrow T \cup \{D'\}$.\\
\hspace{2ex} \hspace{2ex} \textbf{end if}\\
\hspace{2ex} \textbf{end for}\\
\textbf{end while}\\
\end{flushleft}

Two full-dimensional Delone cells $D(x)$ and $v+D(x)$,
both containing the origin, are equivalent under $\Orth(L)$
if and only if $0$ and $-v$ are equivalent under the stabilizer
group of $D(x)$ in $\Iso(L)$.
As a consequence, we can compute the vertices of $\DV(L)$ under $\Orth(L)$
in the following way:
For every orbit of full-dimensional Delone cells given by a 
representative $D(x)$, we compute the orbits of vertices of $D(x)$
under the stabilizer group and get the corresponding
orbits of vertices of $\DV(L)$ under $\Orth(L)$.

\subsection{Finding an initial vertex}
\label{Initial}

Now we explain a method for computing an initial vertex of the Voronoi cell of
a lattice, i.e.\ a full-dimensional Delone cell containing the origin.
The method we propose is a so-called cutting-plane algorithm,
which is a well-known technique in combinatorial optimization.

Let us describe the geometric idea. We start with an outer
approximation of the Voronoi cell given by linear inequalities.
The first outer approximation is the polytope defined by the
inequalities $\pm b_i\cdot x\leq \tfrac{1}{2} b_i\cdot b_i$ for given lattice
basis vectors $b_1, \ldots, b_n$.
Then we find a vertex $x$ of the approximation by linear
programming (see e.g.\ \cite{schrijver-1986}).
Deciding whether the vertex $x$ belongs to the Voronoi cell $\DV(L)$
can be done as follows: Compute the vertices of the Delone cell $D(x)$.
If the origin is a
vertex of $D(x)$, then $x$ is a vertex of $\DV(L)$. Otherwise $x$ is not
contained in $\DV(L)$, and for all vertices $v$ of $D(x)$ we have the strict
inequality $\|x - v\| < \|v\|$.
So the new linear inequalities $v \cdot x \leq \tfrac{1}{2} v \cdot v$
together with the old ones provide a tighter outer
approximation of the Voronoi cell.
Since we started with a compact outer approximation, finitely
many iterations of these steps suffice to find a vertex of the Voronoi cell.

One advantage of this method is that the computation of all facets of the
Voronoi cell is not required, i.e.\ we do not use Voronoi's characterization
(see Section \ref{lattices zonotopes}) of facet defining vectors,
which involves solving exponentially many closest vector problems.
Figure \ref{IterativeVertex} illustrates this algorithm.

\begin{figure}
\begin{center}
\resizebox{10.0cm}{!}{\includegraphics{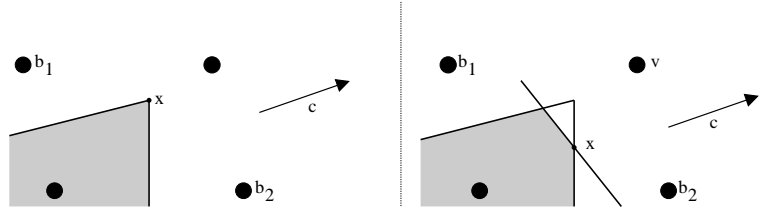}}\par
\end{center}
\caption{Finding an initial vertex of $\DV(L)$}
\label{IterativeVertex}
\end{figure}

\begin{flushleft}
\smallskip
\textbf{Input:} $n$, $B = (b_1, \ldots, b_n) \in \Q^{n \times n}$ matrix of rank $n$.\\
\textbf{Output:} vertex $x$ of $\DV(B\Z^n)$.\\
\smallskip
$c \leftarrow $ random vector in $\Q^n$.\\
${\mathcal B} \leftarrow \{\pm b_1, \ldots, \pm b_n\}$.\\
\textbf{do}\\
\hspace{2ex} $x\leftarrow$ a vertex of the polytope $\{x\mbox{~:~}b \cdot x \leq \frac{1}{2} b \cdot b\mbox{~for~all~} b \in {\mathcal B}\}$,\\
\hspace{2ex} \hspace{4ex} which maximizes $c\cdot x$.\\
\hspace{2ex} ${\mathcal E} \leftarrow $ set of closest lattice vectors in $B\Z^n$ to $x$.\\
\hspace{2ex} \textbf{if} $0 \in {\mathcal E}$ \textbf{then}\\
\hspace{2ex} \hspace{2ex} return $x$.\\
\hspace{2ex} \textbf{end if}\\
\hspace{2ex} ${\mathcal B} \leftarrow {\mathcal B} \cup {\mathcal E}$.\\
\textbf{end do}\\
\end{flushleft}

\subsection{Computing facets of, and finding adjacent Delone cells}
\label{Adjacent}

We want to determine the facets of a full-dimensional Delone cell,
which is given by its vertex set.
This representation conversion problem can be solved by many different
methods. For details and implementations we refer to 
{\tt cdd} \cite{cdd}, {\tt lrs} \cite{lrs},
{\tt pd} \cite{pd} and {\tt porta} \cite{porta}.

In order to exploit the symmetries we use the {\em adjacency decomposition
method} (see \cite{CR,bremner,perfectdim8}). It allows to compute
a complete list of inequivalent facet representatives: We compute an
initial facet by linear programming and insert it into the list 
of orbit representatives of facets.
From any such orbit, we compute the list of facets adjacent to a
representative and insert it, if necessary, into the list of
representatives until all orbits have been treated.
Computing adjacent facets is itself a representation conversion problem
in one dimension lower.
So this method can be applied recursively 
(see \cite{bremner,perfectdim8}). Note that our main algorithm is itself
an adjacency decomposition method.

After the computation of facets, we can compute adjacent full-dimensional
Delone cells: 
We take an initial vertex $v$
and so get a tentative empty sphere. If the sphere is not empty, then we
find another vertex $v$ and iterate until the sphere is indeed empty.
Figure \ref{IterativeAdjacent} illustrates this algorithm.

\begin{figure}
\begin{center}
\resizebox{8.7cm}{!}{\includegraphics[bb=0 0 1639 650,clip]{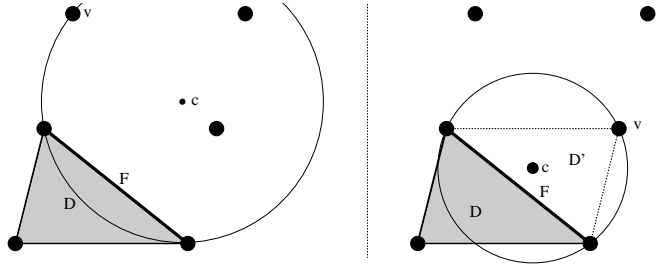}}\par
\end{center}
\caption{Finding $D'$, the full-dimensional Delone cell adjacent to $D$ at $F$}
\label{IterativeAdjacent}
\end{figure}

\begin{flushleft}
\smallskip
\textbf{Input:} $n$, $B \in \Q^{n \times n}$ matrix of rank $n$,
a full-dimensional Delone cell $D$ and a facet $F$ of $D$.\\
\textbf{Output:} vertex set ${\mathcal V}'$ of a full-dimensional Delone cell $D'$ with $D\cap D'=F$.\\
\smallskip
$\phi\leftarrow$ affine function on $\RR^n$ with $F=\{x\in D\mbox{~}:\mbox{~}\phi(x)=0\}$ and $\phi(x)> 0$\\
\hspace{4ex} on $D - F$.\\
${\mathcal V}_F\leftarrow$ vertices of $D$ belonging to $F$.\\
$v\leftarrow$ a point of $B\Z^n$ with $\phi(v)<0$.\\
\textbf{do}\\
\hspace{2ex} $S(c,r)\leftarrow$ sphere around ${\mathcal V}_F\cup \{v\}$.\\
\hspace{2ex} ${\mathcal V}'\leftarrow$ closest vectors in $B\Z^n$ to $c$.\\
\hspace{2ex} \textbf{if} $\Vert v'-c \Vert =r$ for a $v'\in {\mathcal V}'$ \textbf{then}\\
\hspace{2ex} \hspace{2ex} return ${\mathcal V}'$.\\
\hspace{2ex} \textbf{end if}\\
\hspace{2ex} $v\leftarrow$ one element of ${\mathcal V}'$.\\
\textbf{end do}\\
\end{flushleft}

One way to speed up the convergence of this algorithm in practice
is to heuristically choose an initial
vector $v$ with a sphere $S(c,r)$ of small radius.

\subsection{Checking equivalence}\label{Equivalence}

We have to test equivalence and compute stabilizers under the group $\Iso(L)$
of Delone cells of different dimensions. Below we propose three different
methods for this.

We can encode a Delone cell $D$ by the center $c(D)$ of the empty sphere
around it or by the {\em vertex barycenter} $g(D)=\frac{1}{|\vertt D|}\sum_{v\in \vertt D} v$ of its vertex set $\vertt D$.
Both $c(D)$ and $g(D)$ are invariant under the stabilizer of $D$.
Any two full-dimensional Delone cells $D$, $D'$ are equal
if and only if $c(D)=c(D')$.
However, it is possible if $n\geq 3$ that $c(D)$ lies outside
or on the boundary of $D$.
If $c(D)$ lies on the boundary of $D$ then a facet containing $c(D)$, 
which is itself a Delone cell, has the same center as $D$.
Hence, the sphere centers can be used to distinguish full-dimensional Delone 
cells but they do not distinguish Delone cells.
Therefore, we use the vertex barycenter.

In the first method we consider the classes of the vertex barycenters
$g(D)$ and $g(D')$ in the quotient $\QuotS{\RR^n}{L}$ and check
their equivalence
under the finite group $\QuotS{\Iso(L)}{L}\simeq \Orth(L)$.
The generic methods underlying isomorphism and stabilizer computations
generate the full orbit of $g(D)$ under $\QuotS{\Iso(L)}{L}$.
This is typically memory intensive. In some cases we can use a
method from computational group theory, which we now explain in an example.
Suppose $g(D)$ is expressed in a basis $(b_1,\ldots,b_n)$ of $L$
as $(\frac{\alpha_1}{2\cdot 3}, \ldots, \frac{\alpha_n}{2\cdot 3})$
with $0\leq \alpha_i\leq 5$ and we want to compute its stabilizer
under the group $\QuotS{\Iso(L)}{L}$.
The vector $2g(D)$ reduced modulo $L$ is expressed as
$(\frac{\tilde{\alpha}_1}{3}, \ldots, \frac{\tilde{\alpha}_n}{3})$
with $0\leq \tilde{\alpha}_i\leq 2$.
We first compute the stabilizer $H$ of the vector $2g(D)$ under
the action of $\QuotS{\Iso(L)}{L}$.
The stabilizer of $g(D)$ under $\QuotS{\Iso(L)}{L}$ is equal to
the stabilizer of $g(D)$ under $H$.
This method generalizes to more than two prime factors and it
is more memory efficient because
the generated orbits are smaller.

The second method uses finite metric spaces of the vertex set of
full-dimensional Delone cells obtained from the metric
$\Vert v-v'\Vert^2$ \cite[Chapter 14]{DL}.
A finite metric space defines an edge-weighted graph. Testing
if two edge weighted graphs are isomorphic can be reduced 
to testing if two vertex-weighted graphs are isomorphic
(see \cite[Page 25]{nauty-manual}).
In practice, the program {\tt nauty} \cite{nauty-manual} can solve the
isomorphism problem
if the number of vertices of $D$ and $D'$ is not too large.
If the metric spaces are not isomorphic, then $D$ and $D'$ are
not equivalent under $\Iso(L)$.
If they are isomorphic then every graph isomorphism corresponds to
a linear isometry between $D$ and $D'$ \cite{bremner,perfectdim8}.
For each of those isomorphisms, we check if it belongs to $\Iso(L)$.
This method is useful when the isometry group of $D$ is small.

For the third method, we use laminations over the
$n$-dimensional lattice $L$.
Let $D$ be a Delone cell of $L$ with vertex barycenter $g$.
One defines an $(n+1)$-dimensional lattice $L(g)$ by embedding
$L\subseteq \R^n$ into $\RR^{n+1}$ and adding layers to it
\begin{equation*}
L(g) = \left\{\alpha v + h : h \in L, \alpha \in \ZZ\right\} \subseteq \RR^{n+1},
\end{equation*}
where $v \in \RR^{n+1}$ is chosen so that $v+g$ is orthogonal to the
space spanned by $L$ and normalized so that $\|v + g\| = 1$.
A variant
of this construction is used for example to build the laminated
lattices, see \cite[Chapter 6]{conwaysloane}.
If $\phi$ is an element of $\Orth(L(g))$ preserving every layer of
the lamination, then
it maps the vector $v$ to some vector $w=v+h$ with $h\in L$.
The function $x\mapsto \phi(x)+h$ preserves the Delone cell and every
element preserving the layers is obtained in this way.
In practice, we can use the program {\tt AUTO} 
(see \cite{plesken}) of the package {\tt CARAT} (see \cite{carat-1998}) for
computing this automorphism group. The isomorphism problem is treated
similarly using the program {\tt ISOM}.

\subsection{Working with Gram matrices and periodic structures}\label{grammatrices}

In many cases, it is more convenient to work with the Gram matrix
$B^{T} B$ instead of the lattice basis $B$
(see \cite[Chapter 2.2]{conwaysloane}).
For instance, when $B$ is irrational but $B^T B$ is rational.
Note that our algorithms can be reformulated in terms of
Gram matrices.
Note also that all our algorithms can be modified to
deal with periodic point sets, that is for finite unions of
lattice translates. Our implementation is available from 
\cite{polyhedral}.

\subsection{Comparison}\label{sec:comparison}

In \cite{diamond} Viterbo and Biglier describe another algorithm for
computing the Voronoi cell of a lattice, called the diamond cutting
algorithm.
As in our approach they start with a parallelepiped $P$ defined by
the basis vectors.
Then they determine all lattice vectors which lie in a sphere containing
$2P$. This set contains all facet defining lattice vectors of $\DV(L)$.
Successively they add cutting planes obtained from these vectors
and update the complete face lattice of the tentative Voronoi cell.
They terminate when its volume coincides with $\det L$.
Their implementation uses floating point arithmetic.

In comparison, our approach has the following advantages:
We use the presence of symmetry in an efficient way.
We do not need to compute a huge initial list of potential facet defining
lattice vectors.
Our algorithm does not need to compute the face lattice,
not even for computing the quantizer constant as explained below.
Our implementation uses rational arithmetic only.

\section{Computing quantizer constants}
\label{sec:quantizer}

Recall from the introduction that the quantizer constant of a lattice $L$ is 
the integral
\begin{equation*}
G(L) = (\det L)^{-(1+2/n)} \int_{\DV(L)} \|x\|^2 dx.
\end{equation*}
A standard method for computing the integral $G(L)$ is to decompose $\DV(L)$ into simplices.
Suppose that $S$ is a simplex with vertices $v_1, \ldots, v_{n+1}$ in $\RR^n$.
Then (see \cite[Chapter 21, Theorem 2]{conwaysloane}) the following holds:
\begin{equation*}
\int_{S} \Vert x\Vert^2 dx=\frac{\vol S}{(n+1)(n+2)}\left(\left\Vert \sum_{i=1}^{n+1} v_i\right\Vert^2+\sum_{i=1}^{n+1} \Vert v_i\Vert^2\right).
\end{equation*}
Thus $G(L)$ can be obtained by summing the integrals of all simplices 
in a decomposition of $\DV(L)$.
Several practical methods for decomposing a polytope into simplices
are discussed in \cite{fukudaintegral}. In our implementation, 
we use the triangulation obtained by the program {\tt lrs}.
However, this method as well as the other methods explained in 
\cite{fukudaintegral} are sometimes impractical and 
they do not use symmetries.

In order to get a group invariant decomposition, we can use the barycentric
subdivision of $P$.
That is, given any flag $F_0\subset F_1\subset\ldots\subset F_n=P$ of
faces of $P$, we associate the simplex with vertex set
$g_0, g_1,\ldots, g_n$ where $g_i$ is the vertex barycenter of $F_i$.
Note that in general there is a difference between the barycenter 
$\frac{1}{\vol P}\int_P x dx$ of a polytope $P$ and its vertex barycenter
$\frac{1}{|\vertt P|}\sum_{v\in \vertt P} v$.
The group acts on the barycentric subdivision and the stabilizer of each
simplex is trivial. In practice, the number of orbits of flags
can be too large. 

We propose a hybrid approach, which combines the benefits of
both methods. Let ${\mathcal F}$ be the facet set of an $n$-dimensional
polytope $P$.
We can assume without loss of generality that $P$ has the origin as
its vertex barycenter. We then have
\begin{equation}\label{FirstSplitting}
\int_{P} \Vert x\Vert^2dx = \sum_{F\in {\mathcal F}} \int_{\conv(F,0)} \Vert x\Vert^2 dx.
\end{equation}
To compute this sum, it is sufficient to compute the integrals only for
orbit representatives of facets.
Let $F$ be a facet of $P$ and $p_F$ be a point in the affine space
spanned by $F$.
Then we can transform the integral over the cone $\conv(F, 0)$
in the following way:
\begin{equation*}
\begin{split}
\int_{\conv(F, 0)} \Vert x\Vert^2 dx=\frac{1}{n+2}\left(\int_{F} \Vert y-p_F\Vert^2 dy\right.\\
\left.+2 \int_{F} (y-p_F)\cdot p_F\, dy+\vol F \Vert p_F\Vert^2\right) .
\end{split}
\end{equation*}
If $p_F$ is the orthogonal projection of the origin $0$ onto $F$ then
the second summand vanishes. This point may not be invariant
under the automorphism group of the facet $F$, but the vertex barycenter
is.
If we use the vertex barycenter, we also have to compute 
the barycenter of the polytope $F$ as well as the volume
and the square integral.
In order to use symmetries coming from non-orthogonal linear transformations
of $P$, we use the matrix valued integral
\begin{equation*}
I_{0,1,2}(P)=\int_P \left(\begin{array}{c}
1\\
x
\end{array}\right) (1, x^t)\,dx.
\end{equation*}
This integral splits according to
\begin{equation*}
I_{0,1,2}(P)=\left(\begin{array}{cc}
I_0(P)   & I_1(P)^t\\
I_1(P) & I_2(P)
\end{array}\right),
\end{equation*}
where 
\begin{equation*}
I_{0}(P)=\int_P\,dx=\vol P, \quad
I_{1}(P)=\int_P x\, dx, \quad
I_{2}(P)=\int_P x x^t\, dx.
\end{equation*}
Let $G$ be a group of automorphisms of $P$.
If $g\in G$ acts on $\RR^n$ as $x\mapsto Ax+v$ then we define $H(g)=\left(\begin{array}{cc}
1  & 0\\
v  & A
\end{array}\right)$ the corresponding $(n+1)\times (n+1)$ matrix acting on homogeneous coordinates.
Let $O_1, \ldots, O_r$ be the $G$-orbits of facets of $P$,
with representatives $F_1, \ldots, F_r$.
Then the integral $I_{0,1,2}(P)$ simplifies to
\begin{equation*}
I_{0,1,2}(P)=\sum_{i=1}^{r} |O_i|\left(\frac{1}{|G|}\sum_{g\in G} H(g) I_{0,1,2}(\conv(F_i, 0)) H(g)^t\right).
\end{equation*}

Assume that $I_{0,1,2}(\conv(F_i, 0))$ is already computed. 
To compute the sum in the parenthesis, we first incrementally compute a basis
of the affine hull of the orbit $\{H(g) I_{0,1,2}(\conv(F_i, 0)) H(g)^t : g\in G\}$.
The only $G$-invariant element of the affine hull is the sum we
want to compute.

We now want to compute $I_{0,1,2}(\conv(F, 0))$ in terms of lower
dimensional integrals.
The integral depends on the chosen basis. If $f$ is an affine transformation
of $\RR^n$, then the change of variables formula for integrals gives
\begin{equation*}
H(f) I_{0,1,2}(fP) H(f)^t |\det H(f)|=I_{0,1,2}(P),
\end{equation*}
for any an $n$-dimensional polytope $P$ in $\RR^n$.
This allows to compute $I_{0,1,2}(P)$ for another basis.
So, we can choose a coordinate system such that
\begin{equation*}
F=\left\lbrace\left(\begin{array}{c}
1\\
x
\end{array}\right) :  x\in F'\right\rbrace\subset \RR^n,
\end{equation*}
where $F'\subset \RR^{n-1}$ is an $(n-1)$-dimensional polytope.
We then have the following formulas:
\begin{equation*}
\begin{array}{c}
I_0(\conv(F,0))=\frac{1}{n}I_0(F'),\quad
I_1(\conv(F,0))=\frac{1}{n+1}\left(\begin{array}{c}
I_0(F')\\
I_1(F')
\end{array}\right),\\
\quad
I_{2}(\conv(F, 0))=\frac{1}{n+2}I_{0,1,2}(F').
\end{array}
\end{equation*}
For computing $I_{0,1,2}(F')$, we have two options: Either we use the
first method of this section, which involves computing a triangulation
or we apply the above method recursively.
The decision is made heuristically, depending on the size of the
automorphism group of $F$ and its number of vertices.
In order to reduce the size of the computation, one can store
intermediate results.

Those methods are general and apply to any polytope and any polynomial
function, which we want to integrate over $P$.
Note that a similar method of using the standard formula
(\ref{FirstSplitting}) has been used
for computing the volume
in \cite{fukudaintegral} under the name of {\em Lasserre's method}
(\cite{lasserre}), albeit in a non-group setting.

\section{Results}\label{sec:results}

In this section, we collect results from our implementation
of the algorithms explained in Sections \ref{sec:algorithm}
and \ref{sec:quantizer}.
We obtain previously unknown exact covering densities and 
quantizing constants of several prominent lattices and their duals.
Recall that the \textit{dual} $L^*$ of a lattice $L\subset \RR^n$ is defined by
\begin{equation*}
L^*=\left\{x\in\RR^n : y\cdot x\in \ZZ \quad \mbox{for~all~} y\in L\right\}.
\end{equation*}
The {\em covering density} of an $n$-dimensional lattice $L$ is
\begin{equation*}
\frac{\mu(L)^n}{\det L} \vol B_n,
\end{equation*}
where $B_n$ is the unit ball in $\RR^n$.
Other computations of Voronoi cells of lattices can be found in \cite{cellstructure}, 
\cite[Chapter 5]{engelbook} and \cite{patera}.
All computations are done in exact rational arithmetic.
In the tables the covering densities are given in floating
point; the exact expressions would be too large.

\subsection{Coxeter lattices}

The root lattice $\mathsf{A}_n$ is defined by
\begin{equation*}
\mathsf{A}_n=\left\{x\in \ZZ^{n+1}\mbox{~}:\mbox{~}\sum_{i=1}^{n+1}x_i=0\right\}.
\end{equation*}
If $r$ divides $n+1$, the \textit{Coxeter lattice} $\mathsf{A}_n^r$
(see \cite{coxeter-1951}) is defined by translates of $\mathsf{A}_n$:
\begin{equation*}
\mathsf{A}_n^r=\mathsf{A}_n\cup (v_{n}^r+\mathsf{A}_n) \cup \ldots\cup ((r-1)v_{n}^r+\mathsf{A}_n),
\end{equation*}
where $v_n^r=\frac{1}{n+1}\sum_{i=2}^{n+1}(e_i-e_1)$.
The dual lattice of $\mathsf{A}_n^r$ is $\mathsf{A}_n^{n+1/r}$.

The Delone decomposition of the lattice $\mathsf{A}_n^r$ has been studied
in \cite{anzin-2002,anzin-2006,baranovskii-1994} up to dimension $n=15$,
hoping to obtain lattices with low covering density.
One pleasant fact is that the symmetry group of $\mathsf{A}_{n}^r$
contains the group $\Sym(n+1)\times \ZZ_2$.
Latter can be represented as a permutation group acting on $n+3$ points,
which drastically simplifies isomorphism computations.

In Table \ref{TableCoxeterLattices} we list the obtained results.
Note that the lattices $\mathsf{A}_{17}^6$, $\mathsf{A}_{19}^{10}$, 
$\mathsf{A}_{20}^{7}$ and $\mathsf{A}_{21}^{11}$ turn out to give
new record sphere coverings. 
Up to dimension $8$ all those lattices are well known and their Voronoi
cells can be obtained by standard computer algebra software.
Our list is complete up to dimension $21$. For the missing cases
we could not finish the computation.

\begin{table}
\begin{center}
\begin{tabular}{c|c|c|c|c|c}
lattice    & \# orbits & covering density
    & lattice    & \# orbits & covering density\\[0.2mm]\hline
&&&&&\\[-3mm]
$\mathsf{A}_{9}^2$     &6     &$18.543333$& $\mathsf{A}_{9}^5$     &5     &$4.340184$\\[0.4mm]\hline
&&&&&\\[-3mm]
$\mathsf{A}_{11}^{2}$  &6     &$94.090996$& $\mathsf{A}_{11}^{3}$  &11    &$27.089662$\\[0.7mm]
$\mathsf{A}_{11}^{4}$  &16    &$5.598337$& $\mathsf{A}_{11}^{6}$  &4     &$7.618558$\\[0.4mm]\hline
&&&&&\\[-3mm]
$\mathsf{A}_{13}^{2}$  &10    &$134.623484$& $\mathsf{A}_{13}^{7}$  &10    &$7.864060$\\[0.4mm]\hline
&&&&&\\[-3mm]
$\mathsf{A}_{14}^{3}$  &17    &$32.313517$& $\mathsf{A}_{14}^{5}$  &31    &$9.006610$\\[0.4mm]\hline
&&&&&\\[-3mm]
$\mathsf{A}_{15}^{2}$  &10    &$722.452642$& $\mathsf{A}_{15}^{4}$  &19    &$25.363859$\\[0.7mm]
$\mathsf{A}_{15}^{8}$  &10    &$11.601626$& &&\\[0.4mm]\hline
&&&&&\\[-3mm]
$\mathsf{A}_{17}^{2}$  &15    &$1068.513081$ &$\mathsf{A}_{17}^{3}$  &26    &$240.511580$\\[0.7mm]
$\mathsf{A}_{17}^{6}$  &73    &$12.357468$ & $\mathsf{A}_{17}^{9}$  &24    &$17.231927$\\[0.4mm]\hline
&&&&&\\[-3mm]
$\mathsf{A}_{19}^{2}$  &15    &$5921.056764$ & $\mathsf{A}_{19}^{4}$  &58    &$40.445924$\\[0.7mm]
$\mathsf{A}_{19}^{5}$  &80    &$25.609662$ & $\mathsf{A}_{19}^{10}$ &80    &$21.229200$\\[0.4mm]\hline
&&&&&\\[-3mm]
$\mathsf{A}_{20}^{3}$  &40    &$307.209487$ & $\mathsf{A}_{20}^{7}$  &187   &$20.366828$\\[0.4mm]\hline
&&&&&\\[-3mm]
$\mathsf{A}_{21}^{2}$  &21    &$8937.567486$ & $\mathsf{A}_{21}^{11}$ &64    &$27.773140$\\[0.4mm]\hline
&&&&&\\[-3mm]
$\mathsf{A}_{23}^{3}$  &55    &$2405.032746$ & $\mathsf{A}_{23}^{4}$  &85    &$205.561225$\\[0.7mm]
$\mathsf{A}_{23}^{6}$  &187   &$79.575330$ & $\mathsf{A}_{23}^8$  &495    &$31.858162$\\[0.7mm]
$\mathsf{A}_{23}^{12}$ &100   &$43.231587$&&\\[0.4mm]\hline
&&&&&\\[-3mm]
$\mathsf{A}_{24}^{5}$  &144  &$115.011591$&&\\[0.4mm]\hline
&&&&&\\[-3mm]
$\mathsf{A}_{25}^{13}$ &210   &$54.472182$&&\\[0.4mm]\hline
&&&&&\\[-3mm]
$\mathsf{A}_{26}^{3}$  &75  &$3184.1387034$&$\mathsf{A}_{26}^{9}$  &1231  &$50.937168$\\[0.4mm]\hline
&&&&&\\[-3mm]
$\mathsf{A}_{27}^{4}$  &156  &$350.137031$  &$\mathsf{A}_{27}^7$ &650& $81.869181$\\[0.4mm]\hline
&&&&&\\[-3mm]
$\mathsf{A}_{27}^{14}$ &338   &$76.909712$&&\\[0.4mm]\hline
&&&&&\\[-3mm]
$\mathsf{A}_{29}^{3}$  &102   &$25664.644103$& $\mathsf{A}_{29}^{5}$  &347   &$202.040331$\\[0.7mm]
$\mathsf{A}_{29}^{6}$  &711   &$154.329831$&$\mathsf{A}_{29}^{10}$  &3581  &$84.324725$\\[0.7mm]
$\mathsf{A}_{29}^{15}$ &678   &$114.084219$&&\\[0.4mm]\hline
&&&&&\\[-3mm]
$\mathsf{A}_{31}^{16}$ &1225  &$33.934941$&                &      &\\
\end{tabular}
\end{center}
\vspace{2.2mm}
\caption{Number of orbits of full-dimensional Delone cells and covering density for some Coxeter lattices}
\label{TableCoxeterLattices}
\end{table}

\subsection{Laminated lattices}
The laminated lattices, which are defined in \cite[Chapter 6]{conwaysloane},
give the best known lattice sphere packings in many dimensions.
The Delone subdivision is known up to dimension $8$ and in dimension $24$ 
for all laminated lattices and their duals \cite[Chapters 21, 23, 25]{conwaysloane}.
In dimension $16$, the covering density of $\Lambda_{16}$ is known
\cite[Chapter 6]{conwaysloane}.

In Table \ref{LaminatedDelaunays} we list the
obtained results, which are complete up to dimension $17$.

\begin{table}
\begin{center}
\begin{tabular}{c|c|c|c|c|c}
lattice   & \# orbits  & covering density & lattice & \# orbits & covering density\\
\hline
$\Lambda_9$          &5    &$9.003527$              &$\Lambda_9^{*}$ &9 &$9.003527$\\[0.4mm]\hline
$\Lambda_{10}$       &7    &$12.408839$              &$\Lambda_{10}^{*}$ &21 &$9.306629$\\[0.4mm]\hline
&&&&&\\[-3mm]
$\Lambda_{11}^{max}$ &11   &$24.781167$              &$\Lambda_{11}^{max*}$ &18 &$19.243468$\\[0.7mm]
$\Lambda_{11}^{min}$ &18   &$24.781167$              &$\Lambda_{11}^{min*}$ &153 &$8.170432$\\[0.4mm]\hline
&&&&&\\[-3mm]
$\Lambda_{12}^{max}$ &5    &$30.418954$              &$\Lambda_{12}^{max*}$ &8 &$42.728408$\\[0.7mm]
$\Lambda_{12}^{mid}$ &23   &$30.418954$              &$\Lambda_{12}^{mid*}$ &52 &$19.176309$\\[0.7mm]
$\Lambda_{12}^{min}$ &13   &$30.418954$              &$\Lambda_{12}^{min*}$ &78 &$12.292973$\\[0.4mm]\hline
&&&&&\\[-3mm]
$\Lambda_{13}^{max}$ &18   &$60.455139$              &$\Lambda_{13}^{max*}$ &57 &$43.214494$\\[0.7mm]
$\Lambda_{13}^{mid}$ &46   &$35.931846$              &$\Lambda_{13}^{mid*}$ &125 &$19.155991$\\[0.7mm]
$\Lambda_{13}^{min}$ &129  &$60.455139$              &$\Lambda_{13}^{min*}$ &5683 &$13.724864$\\[0.4mm]\hline
&&&&&\\[-3.5mm]
$\Lambda_{14}$       &65   &$98.875610$              &$\Lambda_{14}^{*}$ &1392 &$34.721750$\\[0.4mm]\hline
&&&&&\\[-3.5mm]
$\Lambda_{15}$       &27   &$202.910873$              &$\Lambda_{15}^{*}$ &108 &$25.642067$\\[0.4mm]\hline
&&&&&\\[-3.5mm]
$\Lambda_{16}$       &4    &$96.500266$              &$\Lambda_{16}^{*}$ &4 &$96.500266$\\[0.4mm]\hline
&&&&&\\[-3.5mm]
$\Lambda_{17}$       &28   &$197.719499$              &$\Lambda_{17}^{*}$ &720 &$100.173101$\\[0.4mm]\hline
&&&&&\\[-3.5mm]
$\Lambda_{18}$       &239  &$301.192334$              &   &   &\\[0.4mm]\hline
&&&&&\\[-3.5mm]
$\Lambda_{23}$       &709  &$7609.03133$              &   &   &\\
\end{tabular}
\end{center}
\vspace{2.2mm}
\caption{Number of orbits of full-dimensional Delone cells and covering density for some laminated lattices and their duals}
\label{LaminatedDelaunays}
\end{table}

\subsection{Shorter Leech lattice}

The $4600$ shortest vectors of $\Lambda_{23}^*$ of
define a sublattice of index $2$, called the
{\em shorter Leech lattice} $O_{23}$ (\cite[Page 179, 420, 441]{conwaysloane}).
The Delone decomposition (see Table \ref{O23delaunays}) is remarkable in many
respects: There are only $5$ orbits and the first one has the full symmetry
group of the lattice.
It turns out that $\Lambda_{23}^*=O_{23}\cup (v+O_{23})$ where $v$ is the
center of a centrally symmetric Delone cell lying in the first orbit.
The covering density of $O_{23}$ is $15218.062669$.

\begin{table}
\begin{center}
\begin{tabular}{c|c}
number of vertices    & size of stabilizer group\\
\hline
$94208$  &   $84610842624000$\\
$32$     &   $1344$\\
$24$     &   $10200960$\\
$24$     &   $1320$\\
$24$     &   $1320$
\end{tabular}
\end{center}
\caption{Orbits of full-dimensional Delone cells of $O_{23}$}
\label{O23delaunays}
\end{table}

\subsection{Cut lattices}
The cut polytope $\CUT_n$ is a famous polytope appearing in combinatorial
optimization (see \cite{DL}).
It has $2^{n-1}$ vertices and is of dimension $\frac{n(n-1)}{2}$.
The lattice generated by its vertices is called
{\em cut lattice} and is denoted by $L(\CUT_n)$ (see \cite{cutlattice}).
The polytope $\CUT_n$ is one of its full-dimensional Delone cells.
We list our results in Table \ref{CutLatticesTable}.

\begin{table}

\begin{center}
\begin{tabular}{c|c|c|c}
lattice   & dimension & \# orbits & covering density\\
\hline
$L(\CUT_3)$ &3        &$2$   &$2.09439$\\
$L(\CUT_4)$ &6        &$4$   &$5.16771$\\
$L(\CUT_5)$ &10       &$12$  &$40.80262$\\
$L(\CUT_6)$ &15       &$112$ &$255.4255$\\
\end{tabular}
\end{center}

\caption{Dimensions, number of orbits of full-dimensional Delone cells and covering density of some cut lattices}
\label{CutLatticesTable}
\end{table}

\subsection{Quantizer constants}

In Table \ref{QuantConstantTable} we collect some new exact
quantizer constants.

According to \cite{eriksson}, the lattice $\mathsf{D}_{10}^+$ is conjectured
to be the optimal lattice quantizer.
Conway and Sloane approximated $G(\mathsf{K}_{12})$ (\cite[Table 2.3]{conwaysloane}) using Monte-Carlo integration; 
our exact computation fits into their bounds.

\begin{table}
\begin{center}
\begin{tabular}{c|c}
lattice           &  quantizer constant\\[1mm]\hline
&\\[-3mm]
$\Lambda_9$       & $\frac{151301}{2099520}\approx 0.07206$\\[2mm]
$\Lambda_9^*$     & $\frac{1371514291}{19110297600}\approx 0.07176$\\[2mm]
$\mathsf{A}_9^2$  & $\frac{2120743}{\sqrt[9]{5.2^8}13271040}\approx 0.072166$\\[2.7mm]
$\mathsf{A}_9^5$  & $\frac{8651427563}{\sqrt[9]{2.5^8}26578125000}\approx 0.072079$\\[2.7mm]
$\mathsf{D}_{10}^{+}$      & $\frac{4568341}{64512000}\approx 0.07081$\\[2mm]
$\mathsf{A}_{11}^2$ &$\frac{452059}{\sqrt[11]{3}5702400}\approx 0.07174$\\[2.7mm]
$\mathsf{A}_{11}^3$ &$\frac{287544281699}{\sqrt[11]{4.3^{10}}1325839006800}\approx 0.070426$\\[2.7mm]
$\mathsf{A}_{11}^4$ &$\frac{6387657954959}{\sqrt[11]{3.2^{9}}46506442752000}\approx 0.070494$\\[2.7mm]
$\mathsf{D}_{12}^+$ &$\frac{29183629}{412776000}\approx 0.070700$\\[2mm]
$\mathsf{K}_{12}$  &$\frac{797361941}{\sqrt{3}6567561000}\approx 0.070095$
\end{tabular}
\end{center}

\caption{Quantizer constants of some lattices}
\label{QuantConstantTable}
\end{table}

\section*{Acknowledgements}

We thank David Avis,
Andreas Enge,
Alexander Hulpke, 
Gabriele Nebe,
Warren Smith,
Bernd Souvignier
and the anonymous referees for helpful discussions and suggestions.

\end{document}